\theoremstyle{definition}
\newtheorem{definition}{Definition}%[section]
\newtheorem{theorem}[definition]{Theorem}
\newtheorem{proposition}[definition]{Proposition}
\newtheorem{lemma}[definition]{Lemma}
\newtheorem{corollary}[definition]{Corollary}
\theoremstyle{remark}
\newtheorem{remark}[definition]{Remark}
\newcounter{enumctr}
\newcommand{\R}{\mathbb{R}}
\newcommand{\C}{\mathbb{C}}
\renewcommand{\phi}{\varphi}
\newcommand{\rT}{\mathrm {T}}
\begin{document}
\title{\vspace*{-10mm}
Generation of nonlocal fractional dynamical systems by fractional differential equations}
\author{
N.D.~Cong\footnote{\tt ndcong@math.ac.vn, \rm Institute of Mathematics, Vietnam Academy of Science and Technology, 18 Hoang Quoc Viet, 10307 Ha Noi, Viet Nam},
%
%T.S.~Doan\footnote{\tt dtson@math.ac.vn, \rm Institute of Mathematics, Vietnam Academy for Science and Technology, 18 Hoang Quoc Viet, 10307 Ha Noi, Viet Nam},
%
\;and\;
H.T.~Tuan\footnote{\tt httuan@math.ac.vn, \rm Institute of Mathematics, Vietnam Academy of Science and Technology, 18 Hoang Quoc Viet, 10307 Ha Noi, Viet Nam}
}
%\date{26 April 2016}
\date{}
\maketitle
\begin{abstract}
We show that any two trajectories of solutions of a one-dimensional fractional differential equation (FDE) either coincide or do not intersect each other. In contrary, in the higher dimensional case, two different trajectories can meet. Furthermore, one-dimensional FDEs and triangular systems of FDEs generate nonlocal fractional dynamical systems, whereas a higher dimensional FDE does, in general, not generate a nonlocal dynamical system. 
\end{abstract}
\section{Introduction}
In recent years, fractional differential equations (FDEs) have attracted increasing interest due to
the fact that many mathematical problems in science and engineering can be modeled by fractional differential equations, see e.g.,\ \cite{Podlubny1999, Kilbas2006, Diethelm2010}. 

In this paper, we consider a $d$-dimensional fractional differential equation involving the \emph{Caputo derivative of order $\alpha \in (0,1)$} on an finite interval $J:= [0,T] $ or on the half real line $J:=[0,\infty)$:
\begin{equation}\label{mainEq}
^{C\!}D_{0+}^\alpha x(t)=f(t,x(t)),
\end{equation}
where $x: J \rightarrow \R^d$ is a vector function,  $f: J\times \R^d\rightarrow \R^d$ is a continuous vector-valued function. A continuous function $x: J \rightarrow \R^d$ is called a {\em solution} of \eqref{mainEq} if it satisfied \eqref{mainEq} for all $0<t\in J$. The value $x(0)$  is called the initial value of the solution $x(\cdot)$.

We are interested in the dynamical properties of \eqref{mainEq}: we want to know whether \eqref{mainEq} generates a dynamical system so that the tools and methods of the classical theory of dynamical systems are applicable in the investigation of FDEs. Another important problem in the theory of FDEs, which is closely related to the problem of generation of dynamical systems by FDEs is the question of 
 whether two different trajectories of a FDE can intersect.
We solved both these problems. Namely we show that  
one-dimensional FDEs and triangular FDEs generate nonlocal fractional dynamical systems, whereas a higher dimensional FDE does, in general, not generate a nonlocal dynamical system. Correspondingly, two different trajectories of a one-dimensional or a triangular FDE cannot meet, whereas different trajectories of a high dimensional FDE may intersect each other. As a byproduct of our investigation we get  lower bounds for the solutions of one-dimensional FDEs and triangular linear FDEs.

The question of whether solutions of \eqref{mainEq} can intersect are treated in Diethelm~\cite{Diethelm2008,Diethelm2010}, Diethelm and Ford~\cite{DiethelmFord2012}, Agarwal \textit{et al.}~\cite{Agarwal2010}, Hayek \textit{et al.}~\cite{Hayek1999}, and Bonilla, Rivero and Trujillo~\cite{Bonilla2007}. Note that in the case of ordinary differential equations (ODEs) it is well known that two trajectories of an ODE either coincide or they do not intersect; the authors mentioned above proved that similar results hold for fractional differential equations of order $0<\alpha<1$. Note that the main difficulty of the problem for FDEs is the nonlocal nature (or history memory) of solutions of FDEs. The above authors used various tools to deal with the FDE case. However we notice flaws in their proofs which make their proofs incomplete. We will present some discussion about this matter in Section~\ref{sec.separation1d} of the paper.

The paper is organized as follows: 
Section \ref{sec.preliminaries} is a preparatory section where we present some basic notions from fractional calculus and the theory of  FDEs.
Section \ref{sec.separation1d} is devoted to results on separation of solutions of one-dimensional FDEs; in this section we also give discussion about flaws in the proofs of separation of solutions of FDEs in the above mentioned papers.
In Section \ref{sec.1dimNonlocalDS} we present results on generation of nonlocal fractional dynamical systems by one-dimensional FDEs.
Section~\ref{sec.triangular} is devoted to high dimensional triangular systems of FDEs, where based on the results in Section~\ref{sec.1dimNonlocalDS} we show that a triangular system of FDEs does generate a nonlocal fractional dynamical system.
 In Section~\ref{sec.d-dim} we show that a higher dimensional FDE does, in general, not generate a nonlocal dynamical system; two different trajectories of a high dimensional FDE may intersect each other in finite time.

\section{Preliminaries}\label{sec.preliminaries}

We start this section by briefly recalling a framework of fractional calculus and fractional differential equations. We refer the reader to the books \cite{Diethelm2010,Kilbas2006} for more details.

Let $\R^d$ be the standard $d$-dimensional Euclidean space equipped with usual Euclidean norm.
We denote by $\R_+$ the set of all nonnegative real numbers.
Denote by $C([0,\infty);\R^d)$ the space of continuous functions from $[0,\infty)$ to $\R^d$,
 and by $\left(C_\infty(\R^d),\|\cdot\|_\infty\right)\subset C([0,\infty);\R^d)$ the space of all continuous functions $\xi:\R_+\rightarrow \R^d$ 
which are uniformly bounded on $\R_+$, i.e.\
\[
\|\xi\|_\infty:=\sup_{t\in \R_+}\|\xi(t)\|<\infty.
\]
It is well known that $\left(C_\infty(\R^d),\|\cdot\|_\infty\right)$ is a Banach space. 

Let $\alpha>0$ and $[a,b]\subset \R$. Let $x:[a,b]\rightarrow \R$ be a measurable function such that $x\in L^1([a,b])$, i.e.,\ $\int_a^b|x(\tau)|\;d\tau<\infty$. Then, the Riemann-Liouville integral of order $\alpha$ is defined by
\[
I_{a+}^{\alpha}x(t):=\frac{1}{\Gamma(\alpha)}\int_a^t(t-\tau)^{\alpha-1}x(\tau)\;d\tau\quad \hbox{ for } t\in (a,b],
\]
where the Gamma function $\Gamma:(0,\infty)\rightarrow \R$ is defined as
\[
\Gamma(\alpha):=\int_0^\infty \tau^{\alpha-1}\exp(-\tau)\;d\tau,
\]
see e.g., Diethelm~\cite{Diethelm2010}. The corresponding Riemann--Liouville fractional derivative of order $\alpha $ is given by
\[
^{R\!}D_{a+}^\alpha x(t):=(D^m I_{a+}^{m-\alpha}x)(t) \qquad\hbox{ for } t\in (a,b],
\]
where $D=\dfrac{d}{dt}$ is the usual derivative and $m:=\lceil\alpha\rceil$ is the smallest integer bigger or equal to $\alpha$. On the other hand, the \emph{Caputo fractional derivative } $^{C\!}D_{a+}^\alpha x$ of a function $x\in C^m([a,b])$, which was introduced by Caputo (see e.g., Diethelm~\cite{Diethelm2010}), is defined by
\[
^{C\!}D_{a+}^\alpha x(t):=(I_{a+}^{m-\alpha}D^m x)(t),\qquad \hbox{ for } t\in (a,b].
\]
The Caputo fractional derivative of a $d$--dimensional vector function $x(t)=(x_1(t),\dots,x_d(t))^{\rT}$ is defined component-wise as $$^{C\!}D^\alpha_{0+}x(t)=(^{C\!}D^\alpha_{0+}x_1(t),\dots,^{C\!}D^\alpha_{0+}x_d(t))^{\rT}.$$

It is well-known that the initial value problem of the FDE \eqref{mainEq} is equivalent to a Volterra integral equation of the second kind. Namely we have the following 

\begin{lemma}\label{lem.integral.form}
A continuous function $x: J\rightarrow \R$ is a solution of the FDE \eqref{mainEq} with the initial value condition $x(0)=x_0$ if and only if it is a solution of the Volterra integral equation of the second kind
\begin{equation}\label{eqn.volterra}
x(t) = x_0 +\frac{1}{\Gamma(\alpha)}\int_0^t (t-\tau)^{\alpha-1}f(\tau,x(\tau)) d\tau.
\end{equation}
\end{lemma}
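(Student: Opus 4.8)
The plan is to prove the two implications separately, using the standard composition rules relating the Riemann--Liouville integral $I_{0+}^\alpha$ and the Caputo derivative $^{C\!}D_{0+}^\alpha$ (with $m=\lceil\alpha\rceil=1$ since $\alpha\in(0,1)$), together with one elementary estimate pinning down the value of a solution at $t=0$; for a vector-valued solution the argument is applied componentwise, following the paper's convention. The two composition facts I would invoke are: $I_{0+}^\alpha\,{}^{C\!}D_{0+}^\alpha x(t)=x(t)-x(0)$ for continuous $x$, and $^{C\!}D_{0+}^\alpha I_{0+}^\alpha g = g$ for continuous $g$; both are classical, see e.g.\ Diethelm~\cite{Diethelm2010}.

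First I would show that a solution $x$ of \eqref{mainEq} with $x(0)=x_0$ solves \eqref{eqn.volterra}: applying $I_{0+}^\alpha$ to both sides of \eqref{mainEq}, the left-hand side becomes $I_{0+}^\alpha\,{}^{C\!}D_{0+}^\alpha x(t)=x(t)-x_0$, while the right-hand side is
\[
I_{0+}^\alpha\big(f(\cdot,x(\cdot))\big)(t)=\frac{1}{\Gamma(\alpha)}\int_0^t (t-\tau)^{\alpha-1}f(\tau,x(\tau))\,d\tau,
\]
and rearranging gives \eqref{eqn.volterra}.

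Conversely, suppose $x$ is continuous and satisfies \eqref{eqn.volterra}. Since $f$ is continuous, $g(\tau):=f(\tau,x(\tau))$ is continuous, hence bounded on $[0,t]$ for each fixed $t$, so $\big|I_{0+}^\alpha g(t)\big|\le \Gamma(\alpha+1)^{-1}\,t^{\alpha}\sup_{[0,t]}|g|\to 0$ as $t\to 0^{+}$; letting $t\to 0^{+}$ in \eqref{eqn.volterra} gives $x(0)=x_0$. Hence $x-x_0=I_{0+}^\alpha g$, and applying $^{C\!}D_{0+}^\alpha$ to both sides, using that $^{C\!}D_{0+}^\alpha$ annihilates constants and that $^{C\!}D_{0+}^\alpha I_{0+}^\alpha g=g$, we obtain $^{C\!}D_{0+}^\alpha x(t)=g(t)=f(t,x(t))$ for all $t>0$, i.e.\ $x$ solves \eqref{mainEq}.

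The step I expect to require the most care is the justification of the two composition identities for merely continuous functions: a solution of \eqref{mainEq} need not be $C^1$, so the literal formula $^{C\!}D_{0+}^\alpha x=I_{0+}^{1-\alpha}Dx$ cannot be used directly. I would handle this via the equivalent reading $^{C\!}D_{0+}^\alpha x={}^{R\!}D_{0+}^\alpha(x-x_0)=D\,I_{0+}^{1-\alpha}(x-x_0)$, which is legitimate here because $x-x_0$ is continuous and vanishes at $0$ (so the boundary term $\tfrac{t^{\alpha-1}}{\Gamma(\alpha)}I_{0+}^{1-\alpha}(x-x_0)(0)$ drops out), together with the semigroup law $I_{0+}^\mu I_{0+}^\nu = I_{0+}^{\mu+\nu}$ on $L^1_{\mathrm{loc}}$, which reduces each composition to the classical identity $D\,I_{0+}^{1}g=g$ for continuous $g$; the details are in Diethelm~\cite{Diethelm2010}.
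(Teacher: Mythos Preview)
Your argument is correct and is essentially the standard proof (via the composition identities $I_{0+}^\alpha\,{}^{C\!}D_{0+}^\alpha x = x - x(0)$ and ${}^{C\!}D_{0+}^\alpha I_{0+}^\alpha g = g$) that underlies the references the paper cites; note that the paper does not give its own proof of this lemma but simply refers to Diethelm~\cite[Lemma~6.2]{Diethelm2010} and Kilbas \textit{et al.}~\cite[Theorem~3.24]{Kilbas2006}, with the multidimensional case handled componentwise. Your care about interpreting ${}^{C\!}D_{0+}^\alpha x$ as ${}^{R\!}D_{0+}^\alpha(x-x_0)$ for merely continuous $x$ is appropriate and matches the treatment in those sources.
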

For a proof we refer to Diethelm~\cite[Lemma 6.2, p. 86]{Diethelm2010} and Kilbas \textit{et al.}~\cite[Theorem 3.24, p. 199]{Kilbas2006} for the one-dimensional case; the multidimensional case is just componentwise application of the one-dimensional case.

\section{Separation of trajectories of solutions of one-dimensional FDE}\label{sec.separation1d}

\subsection{Two different trajectories of a one-dimensional FDE do not meet}
In this section, we consider the one-dimensional case of the system \eqref{mainEq}, i.e. the following FDE
\begin{equation}\label{1dimEq}
^{C\!}D_{0+}^{\alpha}x(t)=f(t,x(t)),
\end{equation}
where $f:J\times \R \to \R$ is a continuous function. Assume that $f$ satisfies the following Lipschitz condition on the second variable: there exists a nonnegative continuous function $L: J \rightarrow \R_+$ such that
\begin{equation}\label{eqn.Lip}
|f(t,x)-f(t,y)| \leq L(t)|x-y|\qquad \hbox{for all}\quad t\in J\quad\hbox{and all}\quad x,y\in\R.
\end{equation}
It is well known that under the Lipschitz condition \eqref{eqn.Lip} the initial value problem for \eqref{1dimEq} has unique solution defined on the whole interval $J$ for any given initial value (see, e.g. Baleanu and Mustafa~\cite[Theorem 2]{Baleanu2010}, Tisdell~\cite[Theorem 6.4]{Tisdell2012}) and Diethelm~\cite[Theorem 6.8]{Diethelm2010}).
We will show that any two solutions of \eqref{1dimEq} either coincide or do not intersect on $J$. For this we need the variation of constant formula for FDEs and the comparision principle, which are useful technical tools for investigation of FDEs.  

\begin{lemma}[Variation of constants formula for FDEs]\label{lem.variationofconstant}
If the function $f$ in the FDE \eqref{1dimEq}
is of the form
$$
f(t,x) = Mx + g(t,x)
$$
for some fixed $M\in \R$ and all $t\in J$, $x\in \R$, then the solution $x(\cdot)$ of \eqref{1dimEq} with the initial value $x(0)=x_0$ satisfies for all $t\in J$ the formula
$$
x(t) = E_\alpha(Mt^\alpha)x_0 + \int_0^t (t-\tau)^{\alpha-1} E_{\alpha,\alpha}(M(t-\tau)^{\alpha})g(\tau,x(\tau)) d\tau,
$$
where 
$$
E_{\alpha,\beta}(z):=\sum_{k=0}^\infty \frac{z^k}{\Gamma(\alpha k+\beta)},\qquad E_{\alpha}(z):=E_{\alpha,1}(z),\qquad z\in\C,
$$
are Mittag-Leffler functions and $\Gamma(\cdot)$ is the Gamma function.
\end{lemma}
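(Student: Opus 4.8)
The plan is to freeze the nonlinearity and reduce the assertion to an explicitly solvable linear Volterra equation. First I would invoke Lemma~\ref{lem.integral.form} to rewrite the solution $x(\cdot)$ of \eqref{1dimEq} with $x(0)=x_0$ as a solution of the Volterra equation \eqref{eqn.volterra}; inserting $f(t,x)=Mx+g(t,x)$ and abbreviating $h(t):=g(t,x(t))$ --- a fixed continuous function on $J$ once $x(\cdot)$ is fixed --- this reads
$$
x(t)=x_0 + M\,(I_{0+}^\alpha x)(t) + (I_{0+}^\alpha h)(t),\qquad t\in J.
$$

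Next I would solve this linear equation by iteration. Substituting the right-hand side into itself $n$ times and using the (standard) semigroup property $I_{0+}^\alpha\,I_{0+}^\beta = I_{0+}^{\alpha+\beta}$ of the Riemann--Liouville integral gives
$$
x(t)=\sum_{k=0}^{n} M^k\bigl( I_{0+}^{k\alpha}x_0 + I_{0+}^{(k+1)\alpha}h\bigr)(t) + M^{\,n+1}\,(I_{0+}^{(n+1)\alpha}x)(t),
$$
and on each compact subinterval $[0,T']\subset J$ the remainder is bounded by $|M|^{n+1}\|x\|_{C[0,T']}(T')^{(n+1)\alpha}/\Gamma((n+1)\alpha+1)$, which tends to $0$ as $n\to\infty$; letting $n\to\infty$ therefore yields a locally uniformly convergent series representation of $x$. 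It then remains to identify the two resulting sums. Since $x_0$ is constant, $(I_{0+}^{k\alpha}x_0)(t)=x_0\, t^{k\alpha}/\Gamma(k\alpha+1)$, so the first sum collapses to $\sum_{k\ge 0}(Mt^\alpha)^k/\Gamma(k\alpha+1)\cdot x_0 = E_\alpha(Mt^\alpha)x_0$. In the second sum I would write each term as $\frac{1}{\Gamma((k+1)\alpha)}\int_0^t(t-\tau)^{(k+1)\alpha-1}h(\tau)\,d\tau$, interchange summation and integration, and recognize
$$
\sum_{k=0}^{\infty}\frac{M^k(t-\tau)^{(k+1)\alpha-1}}{\Gamma((k+1)\alpha)}=(t-\tau)^{\alpha-1}\sum_{k=0}^{\infty}\frac{\bigl(M(t-\tau)^\alpha\bigr)^k}{\Gamma(k\alpha+\alpha)}=(t-\tau)^{\alpha-1}E_{\alpha,\alpha}\bigl(M(t-\tau)^\alpha\bigr);
$$
substituting back $h(\tau)=g(\tau,x(\tau))$ produces exactly the claimed formula.

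The main obstacle --- indeed essentially the only nontrivial point --- is justifying the interchange of the infinite series and the integral in the last display. This is handled by dominated convergence: for $t$ in a fixed compact interval and $0\le\tau<t$, the partial sums are dominated by $(t-\tau)^{\alpha-1}E_{\alpha,\alpha}(|M|(t-\tau)^\alpha)$, which is integrable in $\tau$ with an $L^1$-bound uniform in $t\le T'$. The convergence of the Neumann iteration in the previous step is equally elementary, relying only on the estimate $\|I_{0+}^{m\alpha}u\|_{C[0,T']}\le \|u\|_{C[0,T']}(T')^{m\alpha}/\Gamma(m\alpha+1)$, obtained by induction from the definition of $I_{0+}^\alpha$ together with the Beta-function identity $\int_0^t(t-\tau)^{p-1}\tau^{q-1}\,d\tau=t^{p+q-1}\Gamma(p)\Gamma(q)/\Gamma(p+q)$. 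An alternative, slightly shorter route would be to verify directly that the right-hand side of the claimed formula solves the linear Volterra equation above, using the same Mittag--Leffler convolution identities, and then to appeal to uniqueness of solutions of that linear equation; I prefer the series approach because it derives the Mittag--Leffler functions rather than requiring them to be guessed in advance.
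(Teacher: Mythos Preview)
Your argument is correct. The Neumann iteration of the linear Volterra equation, the estimate on the remainder via $\|I_{0+}^{m\alpha}u\|_{C[0,T']}\le \|u\|_{C[0,T']}(T')^{m\alpha}/\Gamma(m\alpha+1)$, and the identification of the two resulting series with $E_\alpha$ and $E_{\alpha,\alpha}$ are all standard and carried out accurately; the dominated-convergence justification for swapping sum and integral is also fine, since the termwise absolute values sum to $(t-\tau)^{\alpha-1}E_{\alpha,\alpha}(|M|(t-\tau)^\alpha)$, which is integrable on $[0,t]$.

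As for comparison with the paper: the paper does not supply its own proof of this lemma at all --- it simply cites Kilbas \textit{et al.}~\cite[Theorem~5.15]{Kilbas2006} and Diethelm~\cite[Theorem~7.2]{Diethelm2010}. Your successive-approximation derivation is in fact essentially the argument given in those references (Kilbas \textit{et al.} in particular derive the formula by exactly this Picard iteration of the equivalent integral equation), so you have reproduced the cited proof rather than taken a genuinely different route. The alternative you mention at the end --- verify directly that the right-hand side satisfies the Volterra equation and invoke uniqueness --- is the other standard option and is closer in spirit to Diethelm's treatment; either is acceptable here.
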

For a proof of this theorem we refer the reader to Kilbas \textit{et al.}~\cite[Theorem 5.15, p.~323]{Kilbas2006} and Diethelm~\cite[Theorem 7.2, p.~135]{Diethelm2010}.

\begin{theorem}[Comparision Principle~\cite{Lakshmikantham2008}]\label{comparision}
Let $0<q<1$,  $v,w\in C(J,\R)$ are continuous function on $J$, $g\in C(J\times \R,\R)$ is a continuous two-variable funtion on $J\times \R$, and \\
(i) $v(t) \leq v(0) + \dfrac{1}{\Gamma(q)}\int_0^t (t-s)^{q-1}g(s,v(s)) ds$ for all  $t\in J$,\\
(ii) $w(t) \geq w(0) + \dfrac{1}{\Gamma(q)}\int_0^t (t-s)^{q-1}g(s,w(s)) ds$ for all $t\in J$.\\
Suppose further that $g(t,x)$ is nondeacreasing in $x$ for each $t\in J$, and
\begin{equation}\label{eqn.compar1}
v(0) < w(0).
\end{equation}
Then we have
\begin{equation}\label{eqn.compar2}
v(t) < w(t) \qquad\hbox{for all}\quad t\in J.
\end{equation}
\end{theorem}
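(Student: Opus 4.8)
The plan is to argue by contradiction using a ``first crossing time''. Set $m := w - v \in C(J,\R)$; by \eqref{eqn.compar1} we have $m(0) = w(0) - v(0) > 0$, and we must show $m(t) > 0$ for every $t \in J$. Suppose not: then the closed set $Z := \{t \in J : m(t) \le 0\}$ is nonempty, and we put $t_1 := \inf Z$. Since $m$ is continuous and $m(0) > 0$, we get $t_1 > 0$; since $Z$ is closed, $m(t_1) \le 0$; and since $m > 0$ on $[0,t_1)$ by minimality of $t_1$, continuity forces $m(t_1) = 0$. Thus $v(t_1) = w(t_1)$, while $v(s) < w(s)$ for every $s \in [0,t_1)$.

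Next I would evaluate the hypotheses (i) and (ii) at $t = t_1$ and subtract. From (ii), $w(t_1) \ge w(0) + \frac{1}{\Gamma(q)}\int_0^{t_1}(t_1-s)^{q-1} g(s,w(s))\,ds$, and from (i), $v(t_1) \le v(0) + \frac{1}{\Gamma(q)}\int_0^{t_1}(t_1-s)^{q-1} g(s,v(s))\,ds$. Subtracting these two inequalities yields
\[
0 = w(t_1) - v(t_1) \;\ge\; m(0) + \frac{1}{\Gamma(q)}\int_0^{t_1}(t_1-s)^{q-1}\bigl(g(s,w(s)) - g(s,v(s))\bigr)\,ds .
\]

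The last step uses the monotonicity assumption. For each $s \in [0,t_1)$ we have $w(s) > v(s)$, hence $g(s,w(s)) \ge g(s,v(s))$ because $g(t,\cdot)$ is nondecreasing; moreover $(t_1-s)^{q-1}$ is positive and integrable on $(0,t_1)$ since $q-1 > -1$. Therefore the integral above is nonnegative, and we conclude $0 \ge m(0) > 0$, a contradiction. Hence $Z = \emptyset$, i.e.\ $m(t) > 0$ on all of $J$, which is \eqref{eqn.compar2}.

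I expect the only genuinely delicate point to be the choice of the first crossing time $t_1$ together with the observation that $m$ is \emph{strictly} positive on the open interval $[0,t_1)$: the monotonicity of $g$ only delivers the non-strict inequality $g(s,w(s)) \ge g(s,v(s))$, so the strictness required to reach a contradiction must come entirely from the strict initial gap $m(0) > 0$. No regularity beyond continuity of $v$, $w$ and $g$ is needed, since all the integrals involved are elementary improper integrals with an integrable singularity at the upper endpoint; in particular, no differentiation of the inequalities and no appeal to the variation of constants formula (Lemma~\ref{lem.variationofconstant}) is required.
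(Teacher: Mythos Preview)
Your proof is correct and is essentially the same argument as the paper's: both pick the first crossing time $t_1$, use the monotonicity of $g(t,\cdot)$ to make the integral term nonnegative, and derive the contradiction solely from the strict initial gap $v(0)<w(0)$. The only cosmetic difference is that you phrase it via $m=w-v$ and subtract the two inequalities, whereas the paper chains the inequalities for $v(t_1)$ and $w(t_1)$ directly.
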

For convenience of the reader we present here the proof of this theorem by 
 Lakshmikantham and Vatsala~\cite[Theorem 2.1]{Lakshmikantham2008} with a small modification.
\begin{proof}
Suppose that \eqref{eqn.compar2} is not true. Then, because of the continuity of $v(\cdot)$ and $w(\cdot)$
there exists $0< t_1\in J$ such that
\begin{equation}\label{eqn.compar3}
v(t_1) = w(t_1), \qquad\hbox{and}\qquad v(t) <w(t)\quad\hbox{for all}\quad  0\leq t <t_1.
\end{equation}
Using the nondecreasing nature of $g$ and \eqref{eqn.compar3}, taking into account  \eqref{eqn.compar1}, we get
\begin{eqnarray*}
v(t_1) &\leq& v(0) + \dfrac{1}{\Gamma(q)}\int_0^{t_1} (t_1-s)^{q-1}g(s,v(s)) ds\\
&\leq& v(0) + \dfrac{1}{\Gamma(q)}\int_0^{t_1} (t_1-s)^{q-1}g(s,w(s)) ds\\
&<& w(0) + \dfrac{1}{\Gamma(q)}\int_0^{t_1} (t_1-s)^{q-1}g(s,w(s)) ds\\
&\leq& w(t_1),
\end{eqnarray*}
which contradict the condition $v(t_1) = w(t_1)$ in \eqref{eqn.compar3}. Hence \eqref{eqn.compar2} is valid and the proof is complete.
\end{proof}

Now we are in a position to prove our main theorem of this section.
\begin{theorem}[Different trajectories do not meet]\label{separation1dim}
 Assume that $f$ satisfies the Lipschitz condition \eqref{eqn.Lip}. Then for any two different initial values $x_{10}\not= x_{20}$ in $\R$ the trajectories of solutions of the FDE \eqref{1dimEq} do not meet on $J$, i.e., the solutions $x_1(\cdot),x_2(\cdot)$ of 
\eqref{1dimEq} starting from $x_{10}=x_1(0)$ and $x_{20}=x_2(0)$ verify $x_1(t)\not= x_2(t)$ for all $t\in J$.
\end{theorem}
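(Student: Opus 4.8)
The natural strategy is to set up a comparison/contradiction argument exactly along the lines of the Comparison Principle (Theorem~\ref{comparision}), but one must first massage the FDE \eqref{1dimEq} into a form to which that principle applies, since $f(t,x)$ need not be nondecreasing in $x$. The plan is as follows. Suppose, without loss of generality, that $x_{10} < x_{20}$, and suppose for contradiction that the two solutions meet; then by continuity there is a first time $t_1 > 0$ with $x_1(t_1) = x_2(t_1)$ and $x_1(t) < x_2(t)$ for $0 \le t < t_1$. I would like to apply the comparison principle on the interval $[0,t_1]$, but the monotonicity hypothesis on the right-hand side fails in general. The remedy is the classical trick of absorbing a linear term: set $M := \sup_{t\in[0,t_1]} L(t)$ (finite by continuity of $L$ and compactness), and rewrite $f(t,x) = -Mx + g(t,x)$ where $g(t,x) := f(t,x) + Mx$. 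The Lipschitz condition \eqref{eqn.Lip} forces $g(t,\cdot)$ to be nondecreasing on $\R$: for $x < y$, $g(t,y) - g(t,x) = (f(t,y)-f(t,x)) + M(y-x) \ge -L(t)(y-x) + M(y-x) \ge 0$.

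Next I would pass to the variation-of-constants formulation. By Lemma~\ref{lem.variationofconstant} applied with this $M$ (here the fixed constant is $-M$), each solution satisfies
\[
x_i(t) = E_\alpha(-M t^\alpha)\, x_{i0} + \int_0^t (t-\tau)^{\alpha-1} E_{\alpha,\alpha}\big(-M(t-\tau)^\alpha\big)\, g(\tau, x_i(\tau))\, d\tau,\qquad i=1,2.
\]
Now here is the key structural point: I want to run the comparison argument on this representation rather than on the plain Volterra equation \eqref{eqn.volterra}, because the kernel $(t-\tau)^{\alpha-1} E_{\alpha,\alpha}(-M(t-\tau)^\alpha)$ is \emph{nonnegative} for $\alpha\in(0,1)$ and real argument (this positivity of the Mittag-Leffler kernel is the fractional analogue of the nonnegativity of $(t-\tau)^{q-1}$ used in Theorem~\ref{comparision}), and $E_\alpha(-Mt^\alpha)$ is strictly positive. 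Therefore, at the time $t_1$, using $g(\tau,x_1(\tau)) \le g(\tau,x_2(\tau))$ for $\tau\in[0,t_1)$ (from monotonicity of $g$ and $x_1 < x_2$ there) together with $x_{10} < x_{20}$ and positivity of $E_\alpha(-Mt_1^\alpha)$, I get the strict inequality
\[
x_1(t_1) = E_\alpha(-Mt_1^\alpha)x_{10} + \int_0^{t_1}(\cdots)g(\tau,x_1(\tau))\,d\tau
< E_\alpha(-Mt_1^\alpha)x_{20} + \int_0^{t_1}(\cdots)g(\tau,x_2(\tau))\,d\tau = x_2(t_1),
\]
contradicting $x_1(t_1) = x_2(t_1)$. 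This establishes $x_1(t) < x_2(t)$ for all $t\in J$ (doing this on each compact subinterval if $J = [0,\infty)$), and in particular the trajectories never meet; the case $x_{20} < x_{10}$ is symmetric.

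The main obstacle — and the place where I would be most careful — is the sign property of the Mittag-Leffler kernel: one needs that $E_{\alpha,\alpha}(-s) \ge 0$ for all $s\ge 0$ when $\alpha\in(0,1)$, and $E_\alpha(-s) > 0$, so that the strict inequality coming from $x_{10}<x_{20}$ survives the integral transform. This is a known fact (it follows, e.g., from complete monotonicity of $E_\alpha(-s)$ and $s^{\alpha-1}E_{\alpha,\alpha}(-s^\alpha)$ for $\alpha\in(0,1)$), but it is exactly the subtlety that makes the naive arguments in the literature incomplete: on the plain Volterra equation the sign of $g$ itself is not controlled, so one cannot directly compare, whereas here the linear shift plus the positivity of the resolvent kernel does the work. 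An alternative route that sidesteps citing Mittag-Leffler positivity is to feed the two variation-of-constants identities into Theorem~\ref{comparision} directly — taking $v = x_1$, $w = x_2$ and the "$g$" of that theorem to be the appropriate functional — but this requires re-deriving a comparison principle adapted to the shifted equation, so I would prefer the self-contained kernel-positivity argument above, stating the Mittag-Leffler sign lemma explicitly.
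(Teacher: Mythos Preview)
Your proposal is correct and follows essentially the same route as the paper: contradiction at the first meeting time, the linear shift $g(t,x)=f(t,x)+Mx$ with $M=\max_{[0,t_1]}L$ to force monotonicity in $x$, the variation-of-constants representation from Lemma~\ref{lem.variationofconstant}, and then the comparison argument using positivity of the Mittag-Leffler kernel. The paper phrases the final step as ``a small modification of Theorem~\ref{comparision}'' and cites $E_{\alpha,\alpha}(s)>0$ for $s\in\R$ from \cite{Cong}, whereas you write out the strict-inequality chain explicitly and invoke complete monotonicity---but these are cosmetic differences, and you have correctly identified the kernel positivity as the crux.
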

\begin{proof}
For definiteness we assume that 
\begin{equation}\label{eqn.1dim1}
x_1(0)=x_{10} < x_{20}=x_2(0).
\end{equation}
To prove the theorem we show that $x_1(t)< x_2(t)$ for all $t\in J$.
Suppose that this is not true, then there is $0<T_1\in J$ such that $x_1(T_1)\geq x_2(T_1)$. By the continuity property of $x_1(\cdot), x_2(\cdot)$ and \eqref{eqn.1dim1} there is $T_2>0$ such that
\begin{eqnarray}
x_1(T_2)&=& x_2(T_2), \quad\hbox{and} \label{eqn.1dim2}\\
x_1(t) &<& x_2(t)\qquad\hbox{for all}\quad 0\leq t < T_2. \label{eqn.1dim3}
\end{eqnarray}
Put
\begin{eqnarray}
M &:=& \max_{0\leq t\leq T_2} L(t),  \label{eqn.1dim4}\\
g(t,x) &:=& f(t,x) + Mx\qquad \hbox{for all}\quad 0\leq t\leq T_2, x\in\R.\label{eqn.1dim5}
\end{eqnarray}
Then $g: [0,T_2]\times \R \rightarrow \R$ is nondecreasing in $x$ for each $t\in [0,T_2]$ since by \eqref{eqn.Lip}, \eqref{eqn.1dim4} and \eqref{eqn.1dim5}, for any $t\in [0,T_2]$ and $x\leq y$ we have
$$
g(t,y)-g(t,x) = M(y-x) +f(t,y)-f(t,x)\geq (M-L(t))(y-x)\geq 0.
$$
By virtue of Lemma~\ref{lem.variationofconstant}, since $x_1(\cdot)$ and $x_2(\cdot)$ are solutions of \eqref{1dimEq}, on the interval $[0,T_2]$ we have the following equations
\begin{eqnarray}
x_1(t) &=& E_\alpha(-Mt^\alpha)x_{10} \nonumber\\
&&\hspace*{0.1cm}+\;\int_0^t (t-\tau)^{\alpha-1}E_{\alpha,\alpha}(-M(t-\tau)^\alpha)g(\tau,x_1(\tau)) d\tau,\label{eqn.1dim6}\\[6pt]
x_2(t) &=& E_\alpha(-Mt^\alpha)x_{20} \nonumber\\
&&\hspace*{0.1cm}+\;\int_0^t (t-\tau)^{\alpha-1}E_{\alpha,\alpha}(-M(t-\tau)^\alpha)g(\tau,x_2(\tau)) d\tau.\label{eqn.1dim7}
\end{eqnarray}
Since $E_{\alpha,\alpha}(s)>0$ for all $s\in\R$ (see, e.g., Cong \textit{et al.}~\cite[Lemma 2]{Cong}), the function 
$E_{\alpha,\alpha}(-M(t-\tau)^\alpha)g(\tau,x)$
is nondecreasing in the variable $x$ for all $0\leq\tau\leq t\leq T_2$. Therefore, a small modification of 
Theorem~\ref{comparision} is applicable to the pair of integral equations \eqref{eqn.1dim6}-\eqref{eqn.1dim7} on $[0,T_2]$ and gives $x_1(T_2) < x_2(T_2)$. Thus we arrive at a contradiction. Consequently, conclusion of the theorem is true and the proof is completed.
\end{proof}
 
 \begin{remark}
 Theorem~\ref{separation1dim} provides a full solution to Conjecture 1.2 of 
 Diethelm~\cite{Diethelm2008}. Note that in \cite{Diethelm2008} Diethelm provides a partial solution to the problem of separation of trajectories: he proved Conjecture 1.2 in \cite{Diethelm2008} under some restrictive conditions, see \cite[Theorem 2.2]{Diethelm2008}.
 \end{remark}

\subsection{Discussion on the problem of separation of solutions of a general FDE}\label{subsec.discussion}

Diethelm~\cite[Theorem 6.12]{Diethelm2010}
 formulated and proved a theorem on separation of solutions of a FDE, what is the same as our Theorem~\ref{separation1dim}. For the proof he used a fixed point theorem on a short interval of time, then used induction to extend the result to the whole (long) interval of time under consideration. However, his proof contains a flaw in the induction part: the passage to the next step from $N=1$ to $N=2$ does not work since the FDE is history dependent and on the next subinterval the argument leading to contractive mapping do not work like on the first subinterval of time. 

Diethelm, in his joint work with Ford \cite[Theorems 3.1 and 4.1]{DiethelmFord2012}, gave and alternative (to \cite[Theorem 6.12]{Diethelm2010}) proof of the theorem on separation of solution of FDE. Namely, instead of induction forward in time from one subinterval to the next as in \cite{Diethelm2010} they used induction backward in time from one subinterval to the foregoing interval. However, their argument in the first step of induction does not work:
in the notation of \cite{DiethelmFord2012}, the equation (13) is equivalent to equation (8) only if we consider (8) and (13) in the whole interval $[0,b]$; actually the solution $y(t)$, considered as solution of the terminal problem (13), depends on the future value $y(b)=c$, hence we cannot say that the function $g(t)$ on pages 29--30 of \cite{DiethelmFord2012} are independent of the restriction of the function $y$ to $[T_{N-1},T_N]$, therefore the contraction property of the map $\mathcal F$  in the formula (14) of \cite{DiethelmFord2012} does not lead to existence and uniqueness of solution of (14) as claimed in \cite{DiethelmFord2012}. Therefore, the proof of Theorem 3.1 in \cite{DiethelmFord2012} is incomplete, hence so is Theorem 4.1 of \cite{DiethelmFord2012}.

Note that the proof of Theorem 6.12 of Diethelm~\cite{Diethelm2010}  is correct for a first "short" interval of time where the smallness of time combined with bounded Lipschitz condition of $f$ make certain operator contractive, hence two different solutions can not meet. 
By the way, we notice that the continuity alone can also assure the non-intersection of different trajectories in a short time interval. 

Hayek, Trujillo, Rivero, Bonilla and 
Boreno \cite[Theorem 3.1]{Hayek1999} have proved the separation theorem on a "short" interval of time. We also note that Section 4 of \cite{Hayek1999} is false because of the history dependence of solutions of FDE, which prevents us from usage of "usual method of prolongation" (like in ODE) as claimed by the authors of \cite{Hayek1999}. 

Bonilla, Rivero and Trujillo~\cite{Bonilla2007} treated higher dimensional linear systems of FDEs and in Section 3 of \cite{Bonilla2007} they relied on the above mention result by Hayek \textit{et al.}~\cite{Hayek1999}, hence there are gaps in their proofs of some results, namely the proofs of Theorem 1 on page 71 and of Propositions 1 and 2 on page 72 of \cite{Bonilla2007} are incomplete. For a counter example see Section~\ref{sec.d-dim}.

\section{One-dimensional FDEs generate nonlocal dynamical systems}\label{sec.1dimNonlocalDS}

In this section, based on the results on separation of trajectories presented in Section~\ref{sec.separation1d} we show that one-dimensional FDEs generate nonlocal dynamical systems, hence tools and methods of the classical theory of dynamical systems can be applied. 

\subsection{Bounds for solutions of FDEs}
First we formulate and prove a lower bound for solutions of FDE, which provides us with a better understanding of geometry of solutions of one-dimensional fractional differential equations.
\begin{theorem}[Convergence rate for solutions of 1-dim FDEs]\label{thm.convergence.rate}
Assume that $f$ satisfies the Lipschitz condition \eqref{eqn.Lip}.
For any two solutions $x_1(\cdot), x_2(\cdot)$ of the FDE \eqref{1dimEq} and any $t\in J$ the following estimate holds
$$
|x_2(t)-x_1(t)| \geq |x_2(0) - x_1(0)| E_\alpha\big(- (\max_{0\leq \tau\leq t} L(\tau)) t^\alpha\big).
$$
\end{theorem}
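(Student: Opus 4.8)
The plan is to reduce the estimate to the non-crossing result of Theorem~\ref{separation1dim} and then exploit the variation of constants formula together with the positivity of the Mittag-Leffler kernel, following the same scheme as in the proof of Theorem~\ref{separation1dim} but this time retaining, rather than discarding, the leading term.

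First I would fix an arbitrary $t_0\in J$ (the point at which the estimate is to be proved) and set $M:=\max_{0\le\tau\le t_0}L(\tau)$. If $x_1(0)=x_2(0)$, then by the uniqueness of solutions under \eqref{eqn.Lip} we have $x_1\equiv x_2$ and both sides of the asserted inequality vanish, so we may assume without loss of generality that $x_1(0)<x_2(0)$. By Theorem~\ref{separation1dim} this forces $x_1(t)<x_2(t)$ for all $t\in J$, and hence $|x_2(t)-x_1(t)|=x_2(t)-x_1(t)$ throughout $J$.

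Next, writing $f(t,x)=-Mx+g(t,x)$ with $g(t,x):=f(t,x)+Mx$, I would apply Lemma~\ref{lem.variationofconstant} on $[0,t_0]$ to both $x_1(\cdot)$ and $x_2(\cdot)$, subtract the two resulting identities, and obtain, for $t\in[0,t_0]$,
$$
x_2(t)-x_1(t)=E_\alpha(-Mt^\alpha)\,(x_2(0)-x_1(0))+\int_0^t (t-\tau)^{\alpha-1}E_{\alpha,\alpha}(-M(t-\tau)^\alpha)\bigl(g(\tau,x_2(\tau))-g(\tau,x_1(\tau))\bigr)\,d\tau .
$$
Exactly as in the proof of Theorem~\ref{separation1dim}, the Lipschitz condition \eqref{eqn.Lip} together with the choice of $M$ makes $x\mapsto g(t,x)$ nondecreasing on $[0,t_0]$; combined with $x_1(\tau)<x_2(\tau)$ and the positivity $E_{\alpha,\alpha}(s)>0$ for all $s\in\R$ (Cong \textit{et al.}~\cite[Lemma 2]{Cong}), the integrand is nonnegative, so the integral term is $\ge 0$. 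Dropping it and evaluating at $t=t_0$ yields $x_2(t_0)-x_1(t_0)\ge E_\alpha(-Mt_0^\alpha)(x_2(0)-x_1(0))$, which is precisely the claimed bound since $t_0\in J$ was arbitrary.

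I do not expect a genuine obstacle here; the only points needing care are (a) disposing of the trivial case of coinciding initial data, (b) turning the absolute values into a one-sided inequality, for which Theorem~\ref{separation1dim} is exactly the tool, and (c) taking $M$ as the maximum of $L$ over the specific interval $[0,t_0]$ (and not over all of $J$, which may be unbounded or merely larger than necessary), so that the constant in the estimate matches the statement. The monotonization trick and the positivity of the Mittag-Leffler kernel are already established in the material preceding this theorem.
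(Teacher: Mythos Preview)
Your proposal is correct and follows essentially the same approach as the paper's own proof: fix the time, set $M$ to be the maximum of $L$ on $[0,t]$, use Theorem~\ref{separation1dim} to fix the sign of $x_2-x_1$, apply the variation of constants formula with the shifted nonlinearity $g(t,x)=f(t,x)+Mx$, and drop the nonnegative integral term using the monotonicity of $g$ and the positivity of $E_{\alpha,\alpha}$. The only differences are cosmetic (you spell out the trivial case $x_1(0)=x_2(0)$ explicitly), and there is no gap.
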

\begin{proof}
 For definiteness we assume $x_2(0) \geq x_1(0)$. Then by Theorem \ref{separation1dim}, we have $x_2(t)\geq x_1(t)$ for any $t\in J$. For an arbitrary $t\geq 0$ but fixed, put
$M_t:=\sup_{0\leq s\leq t}L(s)$. On the interval $[0,t]$, repeat the arguments of the proof of Theorem~\ref{separation1dim}, we get
\begin{eqnarray*}
x_2(s) -x_1(s) &=& E_\alpha(-M_ts^\alpha)(x_2(0)-x_1(0)) +\\
&&\hspace*{-2.1cm}+\;\int_0^ s (s-\tau)^{\alpha-1}E_{\alpha,\alpha}(-M_t(s-\tau)^\alpha)( g(\tau,x_2(\tau))- g(\tau,x_1(\tau))) d\tau\\[6pt]
&\geq&\hspace*{0.1cm}  E_\alpha(-M_ts^\alpha)(x_2(0)-x_1(0)).
\end{eqnarray*}
Take $s=t$, and the proof is complete.
\end{proof}

\begin{corollary}[Lower bound for solutions of 1-dim FDEs]\label{cor.lower.bound}
Assume that $f$ satisfies the Lipschitz condition \eqref{eqn.Lip}. Assume additionally that $f(t,0)=0$ for all $t\in J$, $x\in\R$. Then for
 any solution $x(\cdot)$ of the FDE \eqref{1dimEq} and any $t\in J$ the following estimate holds
$$
|x(t)| \geq |x(0)| E_\alpha\big(- (\max_{0\leq \tau\leq t} L(\tau)) t^\alpha\big).
$$
\end{corollary}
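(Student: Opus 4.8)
The plan is to obtain this corollary as an immediate consequence of Theorem~\ref{thm.convergence.rate} by using the zero function as one of the two solutions. The first step is to observe that the hypothesis $f(t,0)=0$ for all $t\in J$ forces the constant function $x_0(\cdot)\equiv 0$ to be a solution of the FDE \eqref{1dimEq}. This can be seen directly from the definition of the Caputo derivative (which annihilates constants, so $^{C\!}D_{0+}^\alpha 0 = 0 = f(t,0)$), or equivalently via Lemma~\ref{lem.integral.form}: a continuous function is a solution exactly when it satisfies \eqref{eqn.volterra}, and for the constant $0$ with initial value $0$ the right-hand side of \eqref{eqn.volterra} equals $0+\frac{1}{\Gamma(\alpha)}\int_0^t(t-\tau)^{\alpha-1}f(\tau,0)\,d\tau = 0$. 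By the Lipschitz condition \eqref{eqn.Lip} this is in fact the unique solution with initial value $0$.

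The second step is simply to apply Theorem~\ref{thm.convergence.rate} to the pair $x_1(\cdot):=x_0(\cdot)\equiv 0$ and $x_2(\cdot):=x(\cdot)$, the given solution. Since both functions satisfy the Lipschitz hypothesis of that theorem, it gives, for every $t\in J$,
\[
|x(t)-0| \geq |x(0)-0|\, E_\alpha\big(-(\max_{0\leq\tau\leq t}L(\tau))\,t^\alpha\big),
\]
which is precisely the asserted estimate. Nothing further is required.

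I do not expect any real obstacle: the entire content is the reduction to Theorem~\ref{thm.convergence.rate}. The only point deserving a moment's care is the verification that $x\equiv 0$ is genuinely a solution of \eqref{1dimEq}, which is exactly what the extra assumption $f(t,0)=0$ is there to provide; once this is noted, the corollary follows in one line from the already-established convergence-rate theorem.
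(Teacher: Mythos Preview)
Your proposal is correct and matches the paper's own proof essentially verbatim: the paper also notes that $f(t,0)=0$ makes the trivial function a solution of \eqref{1dimEq} and then applies Theorem~\ref{thm.convergence.rate} to the pair consisting of $x(\cdot)$ and the trivial solution. The extra remarks you add (verification via Lemma~\ref{lem.integral.form} and uniqueness) are fine but not needed for the argument.
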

\begin{proof}
Since $f(t,0)=0$ the FDE \eqref{1dimEq} has trivial solution. Apply Theorem~\ref{thm.convergence.rate} for the pair of $x(\cdot)$ and the trivial solution of \eqref{1dimEq}.
\end{proof}

For the divergence rate and upper bound for solutions of the FDEs the following statements are easy modifications of well known results.

\begin{theorem}[Divergence rate for solutions of 1-dim FDEs]\label{thm.divergence.rate}
Assume that $f$ satisfies the Lipschitz condition \eqref{eqn.Lip}.
For any two solutions $x_1(\cdot), x_2(\cdot)$ of the FDE \eqref{1dimEq} and for any $t\in J$ the following estimate holds
$$
|x_2(t)-x_1(t)| \leq |x_2(0) - x_1(0)| E_\alpha\big( (\max_{0\leq \tau\leq t} L(\tau)) t^\alpha\big).
$$
\end{theorem}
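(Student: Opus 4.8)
The plan is to imitate the proof of Theorem~\ref{thm.convergence.rate}, but to choose the auxiliary linear part with the \emph{opposite} sign, so that the Mittag-Leffler factor appearing in the variation of constants formula grows rather than decays. If $x_2(0)=x_1(0)$ then, by uniqueness of solutions under the Lipschitz condition \eqref{eqn.Lip}, we have $x_1\equiv x_2$ and the claimed inequality is trivial; so I may assume $x_2(0)\neq x_1(0)$, and for definiteness $x_2(0)>x_1(0)$. Then Theorem~\ref{separation1dim} gives $x_2(t)>x_1(t)$ for all $t\in J$, which is the sign information we shall need to control the integral term.

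Fix $t\in J$ and put $M_t:=\max_{0\le\tau\le t}L(\tau)$. On $[0,t]$ write $f(s,x)=M_t x+g(s,x)$ with $g(s,x):=f(s,x)-M_t x$. The Lipschitz bound \eqref{eqn.Lip} together with $L(s)\le M_t$ shows that $g(s,\cdot)$ is nonincreasing on $\R$ for every $s\in[0,t]$: for $x\le y$,
\[
g(s,y)-g(s,x)=\big(f(s,y)-f(s,x)\big)-M_t(y-x)\le\big(L(s)-M_t\big)(y-x)\le 0 .
\]
Applying Lemma~\ref{lem.variationofconstant} to $x_1(\cdot)$ and $x_2(\cdot)$ on $[0,t]$ with $M=M_t$ and subtracting, for every $s\in[0,t]$ we get
\[
x_2(s)-x_1(s)=E_\alpha(M_t s^\alpha)\big(x_2(0)-x_1(0)\big)+\int_0^s (s-\tau)^{\alpha-1}E_{\alpha,\alpha}\big(M_t(s-\tau)^\alpha\big)\big(g(\tau,x_2(\tau))-g(\tau,x_1(\tau))\big)\,d\tau .
\]
Since $E_{\alpha,\alpha}(z)>0$ for all $z\in\R$ (Cong \textit{et al.}~\cite[Lemma 2]{Cong}) and $x_2(\tau)>x_1(\tau)$, the monotonicity of $g$ makes the integrand nonpositive, so $x_2(s)-x_1(s)\le E_\alpha(M_t s^\alpha)\big(x_2(0)-x_1(0)\big)$. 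Taking $s=t$ and noting that all three quantities $x_2(t)-x_1(t)$, $x_2(0)-x_1(0)$, $E_\alpha(M_t t^\alpha)$ are positive yields the assertion.

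I do not expect any genuine obstacle here: the only points that require care are the sign bookkeeping (using Theorem~\ref{separation1dim} so that the integral term really is $\le 0$) and the strict positivity of $E_{\alpha,\alpha}$. An alternative route, which avoids invoking Theorem~\ref{separation1dim}, is to set $\phi(s):=|x_2(s)-x_1(s)|$, use the Volterra form \eqref{eqn.volterra} and \eqref{eqn.Lip} to obtain $\phi(s)\le\phi(0)+\tfrac{M_t}{\Gamma(\alpha)}\int_0^s(s-\tau)^{\alpha-1}\phi(\tau)\,d\tau$ on $[0,t]$, and then apply the fractional Gronwall inequality, whose extremal solution is $\phi(0)E_\alpha(M_t s^\alpha)$; evaluating at $s=t$ gives the same bound. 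This is the ``easy modification of well known results'' referred to in the statement.
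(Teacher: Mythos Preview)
Your primary argument is correct, but the paper actually takes the second route you sketch at the end: it uses the Volterra form (Lemma~\ref{lem.integral.form}) and the Lipschitz bound to obtain
\[
|x_2(s)-x_1(s)| \le |x_2(0)-x_1(0)| + \frac{M_t}{\Gamma(\alpha)}\int_0^s (s-\tau)^{\alpha-1}|x_2(\tau)-x_1(\tau)|\,d\tau,
\]
and then applies the fractional Gronwall inequality (citing Diethelm~\cite[Theorem 6.19]{Diethelm2010} and Tisdell~\cite[Lemma 3.1]{Tisdell2012}).

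Your primary approach, mirroring Theorem~\ref{thm.convergence.rate} with the opposite sign on the linear part, is a pleasant piece of symmetry, but it buys that symmetry at the cost of invoking Theorem~\ref{separation1dim} to control the sign of the integral term. The paper's Gronwall route avoids this dependence entirely, and that is not merely cosmetic: it is exactly why the divergence estimate extends verbatim to high-dimensional systems (see Remark~\ref{rem.multidim}), where the separation theorem fails (Section~\ref{sec.d-dim}). Your primary argument is intrinsically one-dimensional; the paper's is not.
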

\begin{proof}
Let us denote $M_t = \max_{0\leq \tau\leq t} L(\tau)$. By Lemma~\ref{lem.integral.form}, for all $t\in J$ we have
\begin{eqnarray*}
x_1(t) &=& x_1(0) +\frac{1}{\Gamma(\alpha)}\int_0^t (t-\tau)^{\alpha-1}f(\tau,x_1(\tau)) d\tau,\\
x_2(t) &=& x_2(0) +\frac{1}{\Gamma(\alpha)}\int_0^t (t-\tau)^{\alpha-1}f(\tau,x_2(\tau)) d\tau.
\end{eqnarray*}
Therefore, for any $t\in J$ and  any $0\leq s\leq t$ we have
\begin{eqnarray*}
|x_2(s)-x_1(s)| &\leq& |x_2(0)-x_1(0)| + \\
&&\hspace*{0.5cm} + \frac{1}{\Gamma(\alpha)}\int_0^s (s-\tau)^{\alpha-1}(f(\tau,x_2(\tau)) - f(\tau,x_1(\tau)) ) d\tau\\[5pt]
&\leq& |x_2(0)-x_1(0)| + \frac{1}{\Gamma(\alpha)}\int_0^s (s-\tau)^{\alpha-1} M_t|x_2(\tau)-x_1(\tau)| d\tau.
\end{eqnarray*}
By virtue of the Gronwall-type inequality for FDE (see Diethelm~\cite[Theorem 6.19, p. 111]{Diethelm2010} and Tisdell~\cite[Lemma 3.1, p. 288]{Tisdell2012}), this implies
$$
|x_2(s)-x_1(s)| \leq |x_2(0) - x_1(0)| E_\alpha\big( M_t s^\alpha\big).
$$
Put $s=t$ and since $t\in J$ is arbitrary we have the theorem proved.
\end{proof}

\begin{corollary}[Upper bound for solutions of 1-dim FDEs]\label{cor.upper.bound}
 Assume that $f$ satisfies the Lipschitz condition \eqref{eqn.Lip}. Assume additionally that $f(t,0)=0$ for all $t\in J$, $x\in\R$. Then for
 any solution $x(\cdot)$ of the FDE \eqref{1dimEq} and any $t\in J$ the following estimate holds
$$
|x(t)| \leq |x(0)| E_\alpha\big((\max_{0\leq \tau\leq t} L(\tau)) t^\alpha\big).
$$
\end{corollary}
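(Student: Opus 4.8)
The plan is to mimic exactly the derivation of Corollary~\ref{cor.lower.bound} from Theorem~\ref{thm.convergence.rate}, but now using the divergence estimate of Theorem~\ref{thm.divergence.rate} in place of the convergence estimate. The key preliminary observation is that the additional hypothesis $f(t,0)=0$ for all $t\in J$ forces the constant function $x\equiv 0$ to be a solution of the FDE \eqref{1dimEq}: indeed, substituting $x(t)\equiv 0$ into the equivalent Volterra equation \eqref{eqn.volterra} (Lemma~\ref{lem.integral.form}) gives $0 = 0 + \frac{1}{\Gamma(\alpha)}\int_0^t (t-\tau)^{\alpha-1} f(\tau,0)\,d\tau = 0$, which holds identically. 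By the Lipschitz condition \eqref{eqn.Lip}, the initial value problem for \eqref{1dimEq} has a unique solution through each initial value, so this is the unique solution with $x(0)=0$.

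Next I would fix an arbitrary solution $x(\cdot)$ of \eqref{1dimEq} and apply Theorem~\ref{thm.divergence.rate} to the pair consisting of $x_1(\cdot):=x(\cdot)$ and $x_2(\cdot):=0$, the trivial solution just identified. The theorem then yields, for every $t\in J$,
\[
|x(t)| = |x_2(t)-x_1(t)| \;\le\; |x(0)-0|\, E_\alpha\big((\max_{0\le\tau\le t} L(\tau))\,t^\alpha\big) \;=\; |x(0)|\,E_\alpha\big((\max_{0\le\tau\le t} L(\tau))\,t^\alpha\big),
\]
which is precisely the claimed bound.

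There is no genuine obstacle here; the statement is a direct specialization of Theorem~\ref{thm.divergence.rate}. The only point deserving a line of care is the verification that $x\equiv 0$ is indeed a solution and that uniqueness of solutions (guaranteed by the Lipschitz hypothesis) makes the comparison legitimate. Everything else is a verbatim substitution, exactly parallel to the proof of Corollary~\ref{cor.lower.bound}. One could also remark that, unlike the lower bound, this upper bound does not even require the separation property (Theorem~\ref{separation1dim}), only the Gronwall-type inequality underlying Theorem~\ref{thm.divergence.rate}.
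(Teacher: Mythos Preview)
Your proposal is correct and follows exactly the paper's own proof: the paper simply notes that $f(t,0)=0$ makes the trivial function a solution of \eqref{1dimEq} and then applies Theorem~\ref{thm.divergence.rate} to the pair $x(\cdot)$ and $0$. Your additional remarks (checking the Volterra equation, uniqueness, and the observation that Theorem~\ref{separation1dim} is not needed here) are all correct elaborations but not required for the argument.
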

\begin{proof}
Since $f(t,0)=0$ the FDE \eqref{1dimEq} has trivial solution. Apply Theorem~\ref{thm.divergence.rate} for the pair of $x(\cdot)$ and the trivial solution of \eqref{1dimEq}.
\end{proof}

\begin{remark}\label{rem.multidim}
It is easily seen that Theorem~\ref{thm.divergence.rate} and Corollary~\ref{cor.upper.bound} hold true also for the case of a high dimensional system of FDEs. 
\end{remark}

\subsection{One-dimensional FDEs generate two-parameter flows}\label{subsec.1dim} 

Now we are in a position to show that one-dimensional FDEs generate two-parameter flows. First we define the evolution mappings of  \eqref{1dimEq}.
\begin{definition}\label{dfn.0evolution-mapping}
The mapping
\begin{equation}
\Phi_{0,T_1} :\R\rightarrow \R, \quad x_0 \mapsto x(T_1),
\end{equation}
where $x_0\in\R$ is an arbitrary initial value of \eqref{1dimEq}, $x(\cdot)$ is the solution of \eqref{1dimEq} starting from $x(0)=x_0$ and $x(T_1)$ is the evaluation of $x(\cdot)$ at $T_1$, is called {\em the evolution mapping of \eqref{1dimEq}}.
\end{definition}

\begin{definition}\label{2parameterflow}
A two-parameter family of mappings
$$
\phi_{s,t}(\cdot) : \R\rightarrow \R, \qquad s,t\in J,
$$
 is called  {\em a two-parameter flow in $\R$} if $\phi_{s,t}(x)$ is continuous as a function of three variables $s,t\in J, x\in\R$, for any fixed $s,t\in J$ the mapping $\phi_{s,t}$ is a homeomorphism of $\R$, and  this family satisfies the following flow property
$$
\phi_{s,t}\circ \phi_{u,s} = \phi_{u,t}\qquad\hbox{for all}\quad u,s,t\in J.
$$
\end{definition} 

\begin{theorem}[One-dimensional FDEs generate two-parameter flows in $\R$]\label{thm.2param.flow}
The following statements hold for the one-dimensional FDE \eqref{1dimEq}.
\begin{enumerate}
\renewcommand{\labelenumi}{(\roman{enumi})}
\item The evolution mapping $\Phi_{0,t}$ generated by \eqref{1dimEq} is a bijection for any $t\in J$.
\item The FDE \eqref{1dimEq} generates a two-parameter family of bijections on $J$ by its evolution mappings as follows
\begin{equation}\label{eqn.2flow}
\Phi_{s,t} := \Phi_{0,t}\circ \Phi^{-1}_{0,s}\qquad\hbox{for all}\quad s,t\in J,
\end{equation}
where $\Phi_{0,\cdot}$ is the evolution mapping of \eqref{1dimEq} defined in Definition~\ref{dfn.0evolution-mapping}.
\item The family $\Phi_{s,t}$, $s,t\in J$, generated by the FDE \eqref{1dimEq} is a two-parameter flow in $\R$.
\item If $f$ is linear in $x$ then the two-parameter flow generated by the FDE \eqref{1dimEq} is a flow of linear operators.
\end{enumerate}
\end{theorem}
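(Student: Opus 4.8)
The plan is to prove (i) first — it is the substantive statement — and then deduce (ii)--(iv); the only other point that is not purely formal is the joint-continuity requirement in (iii), which I flag as the main obstacle.

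For (i) I would show that $\Phi_{0,t}$ is a continuous, strictly increasing surjection of $\R$ onto $\R$. Injectivity is immediate from Theorem~\ref{separation1dim}, whose proof in fact shows that $x_{10}<x_{20}$ forces $x_1(t)<x_2(t)$ for all $t\in J$, so $\Phi_{0,t}$ is strictly increasing. Continuity of $x_0\mapsto\Phi_{0,t}(x_0)$ follows from the divergence estimate of Theorem~\ref{thm.divergence.rate}: with $M_t:=\max_{0\le\tau\le t}L(\tau)$ one gets $|\Phi_{0,t}(x_{10})-\Phi_{0,t}(x_{20})|\le |x_{10}-x_{20}|\,E_\alpha(M_t t^\alpha)$, so $\Phi_{0,t}$ is even globally Lipschitz. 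For surjectivity I would invoke the lower bound of Theorem~\ref{thm.convergence.rate}, applied to the solution through $x_0$ and the solution through $0$: this gives $|\Phi_{0,t}(x_0)-\Phi_{0,t}(0)|\ge |x_0|\,E_\alpha(-M_t t^\alpha)$, and since $E_\alpha(-M_t t^\alpha)>0$ we obtain $\Phi_{0,t}(x_0)\to\pm\infty$ as $x_0\to\pm\infty$. A continuous strictly increasing function with these limits maps $\R$ onto $\R$, so $\Phi_{0,t}$ is a bijection; being a strictly monotone continuous bijection of $\R$, it is automatically a homeomorphism.

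Given (i), statement (ii) is immediate, since $\Phi_{0,s}^{-1}$ exists and $\Phi_{s,t}=\Phi_{0,t}\circ\Phi_{0,s}^{-1}$ is a composition of bijections. In (iii), the homeomorphism property for fixed $s,t$ is again just composition of homeomorphisms, and the flow property is formal:
$$
\Phi_{s,t}\circ\Phi_{u,s}=\Phi_{0,t}\circ\Phi_{0,s}^{-1}\circ\Phi_{0,s}\circ\Phi_{0,u}^{-1}=\Phi_{0,t}\circ\Phi_{0,u}^{-1}=\Phi_{u,t},
$$
which in particular yields $\Phi_{t,t}=\id$ and $\Phi_{s,t}^{-1}=\Phi_{t,s}$. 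The genuinely technical point — and the step I expect to be the main obstacle — is the joint continuity of $(s,t,x)\mapsto\Phi_{s,t}(x)$, which I would do in two stages. Stage one: $\Phi_{0,t}(x_0)$ is jointly continuous in $(t,x_0)$, by writing $\Phi_{0,t}(x_0)-\Phi_{0,t_0}(x_0')=\big(\Phi_{0,t}(x_0)-\Phi_{0,t}(x_0')\big)+\big(\Phi_{0,t}(x_0')-\Phi_{0,t_0}(x_0')\big)$, bounding the first difference by the Lipschitz estimate above (uniformly for $t$ in a compact subinterval of $J$) and the second by the modulus of continuity of the single solution through $x_0'$. Stage two: $(s,x)\mapsto\Phi_{0,s}^{-1}(x)$ is jointly continuous. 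If $(s_n,x_n)\to(s_0,x_0)$, then by Theorem~\ref{thm.convergence.rate} applied to the solutions through $y_n:=\Phi_{0,s_n}^{-1}(x_n)$ and through $0$,
$$
|y_n|\le\frac{|x_n|+|\Phi_{0,s_n}(0)|}{E_\alpha(-M_{s_n}s_n^\alpha)},
$$
and for $s_n$ in a compact subinterval the right-hand side stays bounded; hence $(y_n)$ is bounded, and for any convergent subsequence $y_{n_k}\to y^\ast$, stage one gives $\Phi_{0,s_0}(y^\ast)=\lim_k\Phi_{0,s_{n_k}}(y_{n_k})=\lim_k x_{n_k}=x_0$, so $y^\ast=\Phi_{0,s_0}^{-1}(x_0)$ by injectivity; thus $y_n\to\Phi_{0,s_0}^{-1}(x_0)$. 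Composing, $\Phi_{s,t}(x)=\Phi_{0,t}\big(\Phi_{0,s}^{-1}(x)\big)$ is jointly continuous in $(s,t,x)$, which completes (iii).

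For (iv), if $f$ is linear in $x$, say $f(t,x)=a(t)x$ with $a$ continuous, then the linearity of the Caputo derivative together with the linearity of $f$ in $x$ shows that $c_1x_1(\cdot)+c_2x_2(\cdot)$ solves \eqref{1dimEq} whenever $x_1(\cdot),x_2(\cdot)$ do, with initial value $c_1x_1(0)+c_2x_2(0)$; by uniqueness of solutions this means $\Phi_{0,t}$ is a linear map for each $t$, and then $\Phi_{s,t}=\Phi_{0,t}\circ\Phi_{0,s}^{-1}$ is linear as well, so the flow consists of linear operators. In summary, the weight of the proof sits in the surjectivity argument of (i) and the joint-continuity argument of stage two above — both of which rely essentially on the lower bound Theorem~\ref{thm.convergence.rate} to prevent preimages from escaping to infinity — while everything else is the separation theorem, the two growth estimates, and bookkeeping with the identity $\Phi_{s,t}=\Phi_{0,t}\circ\Phi_{0,s}^{-1}$.
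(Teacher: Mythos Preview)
Your proof is correct, and parts (ii)--(iv) match the paper closely (the paper is terser on the joint continuity in (iii), simply invoking Theorems~\ref{thm.convergence.rate} and~\ref{thm.divergence.rate}, while you spell out the two-stage argument explicitly). The genuine difference is in the surjectivity step of (i).

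The paper proves surjectivity by solving a terminal value problem: given $x^*\in\R$ and $T_1\in J$, it truncates $f$ outside a large box to obtain a bounded Lipschitz $\hat f$, invokes an external existence result (Benchohra--Hamani--Ntouyas) to get a solution of the boundary value problem $^{C\!}D_{0+}^\alpha x=\hat f(t,x)$, $x(T_1)=x^*$, and then uses Theorems~\ref{thm.convergence.rate} and~\ref{thm.divergence.rate} to confine that solution to the box where $\hat f=f$. Your route is more elementary: you observe that $\Phi_{0,t}$ is continuous and strictly increasing (from the order-preservation implicit in Theorem~\ref{separation1dim}), and that the lower bound of Theorem~\ref{thm.convergence.rate} forces $\Phi_{0,t}(x_0)\to\pm\infty$ as $x_0\to\pm\infty$; the intermediate value theorem then gives surjectivity directly. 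Your argument is self-contained (no appeal to a separate boundary-value existence theorem, no truncation) and exploits the one-dimensionality---the order structure of $\R$---more transparently. The paper's approach, by contrast, goes through a solvability result that does not itself rely on monotonicity, but since the convergence-rate bound it subsequently needs is already one-dimensional, there is no real gain in generality.
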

\begin{proof}
(i) Fix $T_1\in J$ arbitrary. By Theorem~\ref{separation1dim}, $\Phi_{0,T_1}$ is injective.

To show that 
$\Phi_{0,T_1}$ is surjective, it suffices to show that for an arbitrary $x^*\in\R$ the boundary value problem
\begin{eqnarray}
&^{C\!}D_{0+}^{\alpha}x(t)=f(t,x(t)),\label{eqn.surj1}\\
&x(T_1) = x^*,\label{eqn.surj2}
\end{eqnarray}
where $t\in [0,T_1]$, $f$ is continuous and satisfies the Lipschitz condition \eqref{eqn.Lip}, has a continuous solution. Put
$$
M_1 := \max_{0\leq t\leq T_1} L(t),
$$
where $L(t)$ is determined from \eqref{eqn.Lip}. Let us denote by $\hat x(\cdot)$ the solution of the FDE \eqref{eqn.surj1} satisfying the initial value condition $\hat x(0)=0$. Put
$$
M_2 :=  \max_{0\leq t\leq T_1} |\hat x(t)|, \qquad M_3:= |x^*-\hat{x}(T_1)| +M_2,
$$
and
$$
M_4 := \frac{M_3}{E_\alpha(-M_1T_1^\alpha)}, \qquad M_5 := M_2 + M_4 E_\alpha(M_1T_1^\alpha)+1.
$$
Clearly $M_2<M_3<M_4<M_5$. Define a function on $[0,T_1]\times \R$ as follows
\begin{equation}\label{eqn.surj3}
\hat{f}(t,x) = 
\begin{cases} f(t,x),&\quad\hbox{if}\quad |x| \leq M_5\\
f(t,M_5\frac{x}{|x|}),&\quad \hbox{if}\quad |x| > M_5,
\end{cases}
\end{equation}
and consider the boundary problem of the FDE
\begin{eqnarray}
&^{C\!}D_{0+}^{\alpha}x(t)=\hat{f}(t,x(t)),\label{eqn.surj4}\\
&x(T_1) = x^*.\label{eqn.surj5}
\end{eqnarray}
Clearly $\hat{f}$ is Lipschitz continuous with Lipschitz constant $L(t)$ and bounded on $[0,T_1]\times \R$, 
hence the boundary problem \eqref{eqn.surj4}--\eqref{eqn.surj5} has at least one solution, say $x_1(\cdot)$ (see \cite[Theorem 8]{Benchohra2008}).
We show that $x_1(\cdot)$ is the required solution of \eqref{eqn.surj1}--\eqref{eqn.surj2} and we will be done. To this end notice that for all $t\in [0,T_1]$ we have
$$
|\hat{x}(t)| \leq M_2 < M_5 \qquad\hbox{hence}\quad f(t,\hat{x}(t)) = \hat{f}(t,\hat{x}(t)).
$$
Therefore, $\hat{x}(\cdot)$ is the solution of the FDE \eqref{eqn.surj4} satisfying the initial value condition $\hat{x}(0)=0$.
Apply Theorem~\ref{thm.convergence.rate} to the solutions $\hat{x}(\cdot), x_1(\cdot)$ of the FDE \eqref{eqn.surj4} we get for any $t\in [0,T_1]$ the inequality
$$
|\hat{x}(t)-x_1(t)| \geq |\hat{x}(0) - x_1(0)| E_\alpha\big(- (\max_{0\leq \tau\leq t} L(\tau)) t^\alpha\big) \geq |x_1(0)| E_\alpha(-M_1 T_1^\alpha).
$$
Substituting $t=T_1$ we get
$$
|\hat{x}(T_1) - x_1(T_1)| \geq |x_1(0)|E_\alpha(-M_1 T_1^\alpha),
$$
hence 
$$
|x_1(0)| \leq \frac{|\hat{x}(T_1) - x^*|}{E_\alpha(-M_1 T_1^\alpha)}\leq M_4.
$$
Apply Theorem~\ref{thm.divergence.rate} to the solutions $\hat{x}(\cdot), x_1(\cdot)$ of the FDE \eqref{eqn.surj4}  we get  for any $t\in [0,T_1]$ the inequality
$$
|x_1(t)| \leq |\hat{x}(t)| + |x_1(0)| E_\alpha(M_1T_1^\alpha) \leq M_5,\qquad\hbox{hence}\quad f(t,x_1(t)) = \hat{f}(t,x_1(t)).
$$
Therefore $x_1(\cdot)$ is a solution of the FDE  \eqref{eqn.surj1}, and (i) is proved.

(ii) By (i), the evolution mappings of \eqref{1dimEq} are bijective, hence  $\Phi_{s,t}$ is well defined by \eqref{eqn.2flow}. The flow property is easily verified.

(iii) By (ii), the FDE \eqref{1dimEq} generates a two-parameter family of bijections $\Phi_{s,t}$ of $\R$ for all $s,t\in J$.
From Theorems \ref{thm.convergence.rate} and \ref{thm.divergence.rate} it follows that the bijections $\Phi_{s,t}$ are homeomorphisms and $\Phi$ depends continuously on three variables $s,t,x$.

(iv) Obvious.
\end{proof}

\begin{definition}
The two-parameter flow $\Phi_{s,t}$, specified in Theorem~\ref{thm.2param.flow}, generated by the FDE \eqref{1dimEq} is called the {\em nonlocal dynamical system generated by \eqref{1dimEq}}.
\end{definition}

\begin{remark}
Some distinguished features of the two-parameter flow generated by the FDE \eqref{1dimEq}:\\
(i) The flow has history memory. Though the past has impact on the behavior of the solutions, the solutions form a two-parameter flow of homeomorphisms.\\
(ii) The flow is in general $\alpha$-H\"older, but not $C^1$.
\end{remark}

\begin{remark}
Li and Ma~\cite[Theorem 2]{LiMa13} claimed that they constructed a dynamical system from a FDE. However their construction is false. For a counter example, see 
Cong, Son and Tuan~\cite[Remark 12]{Cong_3}.
\end{remark}

\section{Triangular systems of FDEs generate nonlocal dynamical systems}\label{sec.triangular}
In this section, using the results of Section~\ref{sec.1dimNonlocalDS} we show that a higher dimensional triangular system of FDEs also generates a nonlocal dynamical system.

Let us consider a $d$-dimensional triangular system of (not necessarily linear) FDEs
\begin{equation}\label{eqn.triangular.nonlin}
\begin{cases}
^{C\!}D_{0+}^{\alpha}x_1(t)= &f_1(t,x_1(t)),\\
^{C\!}D_{0+}^{\alpha}x_2(t)= &f_2(t,x_1(t),x_2(t)),\\
\cdots&\cdots\\
^{C\!}D_{0+}^{\alpha}x_d(t)= &f_d(t,x_1(t),x_2(t),\cdots,x_d(t)),
\end{cases}
\end{equation}
where $t\in J$, $x(\cdot)=\big( x_1(\cdot),\ldots,x_d(\cdot)\big)^\rT \in \R^d$, $f=(f_1,\ldots,f_d)^\rT$ are Lipschitz functions in $x$ variables, i.e., there exists a continuous function $L: J\rightarrow [0,\infty)$ such that for all $i=1,\ldots, d$ and all $t\in J$ we have
\begin{equation}\label{eqn.Lip.tri.nonlin}
|f_i(t,x_1,\ldots,x_i)- f_i(t,y_1,\ldots,y_i)| \leq L(t) \sqrt{(x_1-y_1)^2 + \cdots (x_i-y_i)^2}.
\end{equation}
This triangular system has a distinguished property that it can be solved successively coordinate-wise and each time we have to solve only a one-dimensional FDE, hence the triangular system inherits many  features of the one-dimensional FDEs. 

\begin{proposition}[Convergence rate for solutions of a triangular system of FDEs]\label{tri.nonlin.lower.bound}
Assume that the Lipschitz condition  \eqref{eqn.Lip.tri.nonlin} is satisfied.  Then for
 any two solutions $x(\cdot), y(\cdot)$ of the triangular FDE \eqref{eqn.triangular.nonlin} and any $t\in J$ the following estimate holds
 $$
\|x(t)-y(t)\| \geq \|x(0) - y(0)\| E_\alpha\big(- (\max_{0\leq \tau\leq t} L(\tau)) t^\alpha\big).
$$
\end{proposition}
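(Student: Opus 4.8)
The plan is to argue by induction on the dimension $d$, reducing at each step to the one-dimensional convergence-rate estimate of Theorem~\ref{thm.convergence.rate}. For $d=1$ the assertion is exactly Theorem~\ref{thm.convergence.rate}. For the inductive step, let $\bar x=(x_1,\dots,x_{d-1})$ and $\bar y=(y_1,\dots,y_{d-1})$; by the triangular structure these solve the $(d-1)$-dimensional triangular system formed by the first $d-1$ equations of \eqref{eqn.triangular.nonlin}, which again satisfies \eqref{eqn.Lip.tri.nonlin} with the same function $L$, so the induction hypothesis applies to the pair $\bar x,\bar y$. Since
\[
\|x(t)-y(t)\|^2=\|\bar x(t)-\bar y(t)\|^2+|x_d(t)-y_d(t)|^2,
\]
it remains to control the last coordinate and to verify that the two pieces reassemble without loss of the Mittag-Leffler rate.

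To treat the last coordinate I would fix $t$, set $M:=\max_{0\le\tau\le t}L(\tau)$, and introduce the auxiliary function $\hat x_d$ solving on $[0,t]$ the one-dimensional FDE ${}^{C\!}D_{0+}^\alpha \hat x_d(s)=f_d\big(s,y_1(s),\dots,y_{d-1}(s),\hat x_d(s)\big)$ with $\hat x_d(0)=x_d(0)$. Then $\hat x_d$ and $y_d$ are two solutions of the \emph{same} one-dimensional FDE (with the known inhomogeneity $\bar y(\cdot)$), so Theorem~\ref{thm.convergence.rate} yields $|\hat x_d(t)-y_d(t)|\ge|x_d(0)-y_d(0)|\,E_\alpha(-Mt^\alpha)$; as in the proof of Theorem~\ref{separation1dim}, setting $g(s,z):=f_d(s,\bar y(s),z)+Mz$ makes $g$ nondecreasing in $z$, and since $E_{\alpha,\alpha}>0$ the correction term in the variation-of-constants formula of Lemma~\ref{lem.variationofconstant} keeps a definite sign. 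On the other hand $x_d$ and $\hat x_d$ have the same initial value but are driven by $\bar x(\cdot)$ and $\bar y(\cdot)$, so a Gronwall-type inequality for FDEs bounds $|x_d(s)-\hat x_d(s)|$ by $\tfrac{1}{\Gamma(\alpha)}\int_0^s(s-\tau)^{\alpha-1}L(\tau)\|\bar x(\tau)-\bar y(\tau)\|\,d\tau$. Chaining these through $|x_d(t)-y_d(t)|\ge|\hat x_d(t)-y_d(t)|-|x_d(t)-\hat x_d(t)|$ and inserting the induction hypothesis for $\|\bar x(\cdot)-\bar y(\cdot)\|$ is meant to give the full estimate after squaring and adding. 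Note that when $\bar x(0)=\bar y(0)$ uniqueness forces $\bar x\equiv\bar y$, the perturbation term vanishes, and the reassembly is immediate; the real work is the genuinely coupled case.

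The step I expect to be the main obstacle is this reassembly, i.e.\ showing that the constant in the Mittag-Leffler factor does not deteriorate. A crude estimate of $\|{}^{C\!}D_{0+}^\alpha(x-y)\|$ against $\|x-y\|$ produces a constant strictly larger than $\max_{[0,t]}L$, because the lower-triangular Jacobian of the right-hand side of \eqref{eqn.triangular.nonlin} is not controlled in operator norm by $L(t)$ alone; hence the triangular structure has to be used coordinate by coordinate. Concretely, the perturbation term $|x_d-\hat x_d|$, which is produced exactly by $\bar x\neq\bar y$, must be absorbed by the surplus $\|\bar x(t)-\bar y(t)\|^2-\|\bar x(0)-\bar y(0)\|^2E_\alpha(-Mt^\alpha)^2\ge0$ supplied by the induction hypothesis. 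Quantifying this cancellation sharply enough that the constant stays exactly $\max_{[0,t]}L$ and no $d$-dependence enters is, I expect, the heart of the proof.
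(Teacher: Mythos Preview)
Your approach diverges substantially from the paper's, and for a good reason. The paper argues much more directly: it claims the \emph{coordinate-wise} estimate
\[
|x_i(t)-y_i(t)| \ge |x_i(0)-y_i(0)|\,E_\alpha\Big(-\big(\max_{0\le\tau\le t}L(\tau)\big)t^\alpha\Big)
\]
for every $i$, by substituting the already-solved $x_1(\cdot)$ into the second equation to obtain a one-dimensional FDE ${}^{C\!}D_{0+}^\alpha u=f_2(t,x_1(t),u)$ and then applying Theorem~\ref{thm.convergence.rate} to ``the solutions $x_2(\cdot)$ and $y_2(\cdot)$ of this 1-dimensional FDE.'' But $y_2$ satisfies ${}^{C\!}D_{0+}^\alpha y_2=f_2(t,y_1(t),y_2)$, not the equation with $x_1$ inserted, so this invocation of Theorem~\ref{thm.convergence.rate} is unjustified. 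In fact the coordinate-wise inequality is false in general: with $f_1\equiv 0$, $f_2(t,x_1,x_2)=x_1$ (so $L\equiv 1$ works), $x(0)=(1,0)$, $y(0)=(0,1)$, one has $x_2(t)-y_2(t)=t^\alpha/\Gamma(\alpha+1)-1$, which vanishes at a finite time while the right-hand side stays positive. You evidently sensed this, which is why you introduced the auxiliary $\hat x_d$ solving the equation driven by $\bar y(\cdot)$; that is exactly the missing distinction.

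However, your own proposal is also incomplete, and at precisely the point you flag. The reassembly step is not carried out, and the mechanism you sketch does not obviously close. The triangle inequality $|x_d(t)-y_d(t)|\ge|\hat x_d(t)-y_d(t)|-|x_d(t)-\hat x_d(t)|$ can return a negative number; after squaring, the cross term $-2|x_d(0)-y_d(0)|E_\alpha(-Mt^\alpha)\,|x_d(t)-\hat x_d(t)|$ must be dominated by the surplus $\|\bar x(t)-\bar y(t)\|^2-\|\bar x(0)-\bar y(0)\|^2E_\alpha(-Mt^\alpha)^2$, yet your Gronwall bound controls $|x_d-\hat x_d|$ by an \emph{integral} of $\|\bar x-\bar y\|$, not by its endpoint value, and there is no identity linking the two. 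Nothing in the proposal explains why this cancellation should be sharp enough to preserve the exact constant $\max_{[0,t]}L$ with no $d$-dependence. So the paper's route is short but its key step is not valid as written, while your more careful route correctly isolates the difficulty but does not resolve it; in either case the reassembly of the coordinate estimates into the Euclidean-norm bound with the stated constant remains the genuine gap.
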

\begin{proof}
Let $x(\cdot)=(x_1(\cdot),\ldots,x_d(\cdot))^\rT$ and $y(\cdot)=(y_1(\cdot),\ldots,y_d(\cdot))^\rT$ be arbitrary two solutions of  the triangular FDE \eqref{eqn.triangular.nonlin}.
Consider the first equation in \eqref{eqn.triangular.nonlin}: it is a 1-dimensional equation for the first coordinate. Applying Theorem~\ref{thm.convergence.rate} to this equation we get
$$
|x_1(t)-y_1(t)| \geq |x_1(0) - y_1(0)| E_\alpha\big(- (\max_{0\leq \tau\leq t} L(\tau)) t^\alpha\big).
$$
Since the first coordinate is solvable from the first equation we can substitute it into the second equation of the system \eqref{eqn.triangular.nonlin} and get a 1-dimensional FDE for the second coordinate
$$
^{C\!}D_{0+}^{\alpha} u(t)= f_2(t,x_1(t),u(t)) =: \hat{f}_2(t,u(t)),
$$
where due to \eqref{eqn.Lip.tri.nonlin} the function $\hat{f}_2(\cdot,\cdot): J\times \R \rightarrow \R$ is $L(t)$-Lipschitz continuous with respect to the second variable.
Applying Theorem~\ref{thm.convergence.rate} to the solutions $x_2(\cdot)$ and $y_2(\cdot)$ of  this 1-dimensional FDE we get
$$
|x_2(t)-y_2(t)| \geq |x_2(0) - y_2(0)| E_\alpha\big(- (\max_{0\leq \tau\leq t} L(\tau)) t^\alpha\big).
$$
Continue this process we get for any $i=1,\ldots,d$ and any $t\in J$ the inequality
\begin{equation}\label{eqn.tri.nonlin1}
|x_i(t)-y_i(t)| \geq |x_i(0) - y_i(0)| E_\alpha\big(- (\max_{0\leq \tau\leq t} L(\tau)) t^\alpha\big).
\end{equation}
This implies immediately the conclusion of the proposition.
\end{proof}

An important particular case of the triangular system of  FDEs \eqref{eqn.triangular.nonlin} is a linear triangular system of FDEs
\begin{equation}\label{triangularEq}
^{C\!}D_{0+}^{\alpha}x(t)=A(t)x(t),
\end{equation}
where $t\in J, x\in\R^d$,  $A:J\times \R^{d\times d}$ is a bounded continuous triangular $(d\times d)$-matrix function, i.e., 
$A(\cdot) = \Big( a_{ij}(\cdot)\Big)_{1\leq i,j\leq d}$
where either $a_{ij}=0$ for all $i>j$ (upper triangular) or $a_{ij}=0$ for all $i<j$ (lower triangular), and there exists a continuous function $L: J \rightarrow [0,\infty)$ such that
\begin{equation}\label{eqn.tri1}
\|A(t)\| \leq L(t)\qquad \hbox{for all}\quad t\in J.
\end{equation}
Clearly, Proposition~\ref{tri.nonlin.lower.bound} is applicable to the linear triangular system
\eqref{triangularEq}. Moreover, we also have a lower bound for solutions of \eqref{triangularEq}.

\begin{proposition}[Lower bound for solutions of a linear triangular system of FDEs]\label{tri.lower.bound}
Assume that the triangular matrix function $A$ satisfies the conditions \eqref{eqn.tri1}.  Then for
 any solution $x(\cdot)$ of the FDE \eqref{triangularEq} and for any $t\in J$ the following estimate holds
$$
\|x(t)\| \geq \|x(0)\| E_\alpha(-  (\max_{0\leq \tau\leq t} L(\tau))t^\alpha).
$$
\end{proposition}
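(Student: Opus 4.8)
The plan is to obtain this as a direct corollary of Proposition~\ref{tri.nonlin.lower.bound} by viewing the linear triangular system \eqref{triangularEq} as a special instance of the nonlinear triangular system \eqref{eqn.triangular.nonlin} and then comparing the given solution $x(\cdot)$ with the trivial solution.

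First I would reduce to the lower triangular case. If $A(\cdot)$ is upper triangular, the coordinate reversal $y_i(t):=x_{d+1-i}(t)$, which is a linear isometry of $\R^d$, transforms \eqref{triangularEq} into a \emph{lower} triangular linear FDE with a coefficient matrix $\tilde A(t)$ obtained from $A(t)$ by reversing the order of rows and columns; since $\|\tilde A(t)\|=\|A(t)\|\le L(t)$ and $\|y(t)\|=\|x(t)\|$ for all $t\in J$, it suffices to prove the estimate for lower triangular $A$. Next I would check that \eqref{triangularEq} with lower triangular $A$ indeed fits the template \eqref{eqn.triangular.nonlin}: putting $f_i(t,x_1,\dots,x_i):=\sum_{j=1}^{i}a_{ij}(t)x_j$, the Cauchy--Schwarz inequality gives
\[
|f_i(t,x_1,\dots,x_i)-f_i(t,y_1,\dots,y_i)|=\Big|\sum_{j=1}^{i}a_{ij}(t)(x_j-y_j)\Big|\le\Big(\sum_{j=1}^{i}a_{ij}(t)^2\Big)^{1/2}\Big(\sum_{j=1}^{i}(x_j-y_j)^2\Big)^{1/2},
\]
and the first factor, being the Euclidean norm of the $i$-th row of $A(t)$, is bounded by $\|A(t)\|\le L(t)$ thanks to \eqref{eqn.tri1}. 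Hence the Lipschitz condition \eqref{eqn.Lip.tri.nonlin} holds with exactly the same function $L$, so Proposition~\ref{tri.nonlin.lower.bound} applies to \eqref{triangularEq}.

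Finally, since $A(t)\cdot 0=0$, the constant function $y(t)\equiv 0$ is a solution of \eqref{triangularEq}. Applying Proposition~\ref{tri.nonlin.lower.bound} to the pair $x(\cdot)$ and $y(\cdot)\equiv 0$ yields
\[
\|x(t)\|=\|x(t)-y(t)\|\ge\|x(0)-y(0)\|\,E_\alpha\big(-(\max_{0\le\tau\le t}L(\tau))t^\alpha\big)=\|x(0)\|\,E_\alpha\big(-(\max_{0\le\tau\le t}L(\tau))t^\alpha\big),
\]
which is the asserted lower bound. I do not expect any genuine analytic obstacle here: the whole content is that \eqref{triangularEq} is a linear, hence "$f(t,0)=0$", special case of \eqref{eqn.triangular.nonlin}. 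The only two points that require a word of care are the reduction of the upper triangular case to the lower triangular one by reversing the coordinate order, and the observation that the Euclidean norm of any row of $A(t)$ is dominated by $\|A(t)\|$, so that the scalar bound in \eqref{eqn.tri1} is precisely the one needed to invoke \eqref{eqn.Lip.tri.nonlin}.
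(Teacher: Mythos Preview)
Your proposal is correct and follows exactly the paper's approach: apply Proposition~\ref{tri.nonlin.lower.bound} to the pair consisting of $x(\cdot)$ and the trivial solution $0$. The paper's proof is a single sentence that simply asserts applicability of Proposition~\ref{tri.nonlin.lower.bound}; you have filled in the two verifications (reduction of the upper triangular case to the lower triangular template \eqref{eqn.triangular.nonlin} via coordinate reversal, and the Cauchy--Schwarz/row-norm check that \eqref{eqn.tri1} implies \eqref{eqn.Lip.tri.nonlin}) that the paper leaves implicit.
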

\begin{proof}
Since the system \eqref{triangularEq} has trivial solution $0$, we can apply
Proposition~\ref{tri.nonlin.lower.bound} to the two solution $x(\cdot)$ and 0 of \eqref{triangularEq} and get the conclusion of the proposition.
\end{proof}

Similar to the one-dimensional case, for any $T_1\in J$ the mapping
\begin{equation}
\phi_{0,T_1} :\R^d\rightarrow \R^d, \quad x_0 \mapsto x(T_1),
\end{equation}
where $x_0\in\R^d$ is an arbitrary initial value of \eqref{eqn.triangular.nonlin}, $x(\cdot)$ is the solution of \eqref{eqn.triangular.nonlin} starting from $x(0)=x_0$ and $x(T_1)$ is the evaluation of $x(\cdot)$ at $T_1$, is called {\em the evolution mapping of \eqref{eqn.triangular.nonlin}}. By the same arguments as in Section~\ref{sec.1dimNonlocalDS} we can show that the evolution mapping $\phi_{0,t}$ of the triangular FDE \eqref{eqn.triangular.nonlin} is a bijection for any $t\in J$.

\begin{definition}\label{2parameterflow.tri.nonlin}
A two-parameter family of mappings
$$
\phi_{s,t}(\cdot) : \R^d\rightarrow \R^d, \qquad s,t\in J,
$$
 is called  {\em a two-parameter flow in $\R^d$} if the following three conditions are satisfied\\
(i) $\phi_{s,t}(x)$ is continuous as a function of three variables $s,t\in J, x\in\R^d$;\\
 (ii) For any fixed $s,t\in J$ the mapping $\varphi_{s,t}(\cdot)$ is a homeomorphism of $\R^d$;\\
(iii) This family satisfies the following flow property
$$
\phi_{s,t}\circ \phi_{u,s} = \phi_{u,t}\qquad\hbox{for all}\quad u,s,t\in J.
$$
\end{definition} 

\begin{theorem}[Triangular systems of FDEs generate nonlocal dynamical systems]\label{thm.2param.flow.triangular}
 (i) The triangular FDE \eqref{eqn.triangular.nonlin} generates a two-parameter flow  in $\R^d$; namely it generates the two-parameter flow in $\R^d$
 $$
\phi_{s,t} : \R^d\rightarrow \R^d, \qquad s,t\in J,
$$
where $\phi_{0,t}$, $t\in J$, is the evolution mapping of \eqref{eqn.triangular.nonlin} and
$$
\phi_{s,t} := \phi_{0,t}\circ \phi_{0,s}^{-1}\qquad\hbox{for all}\quad s,t\in J.
$$
(ii) The linear triangular system of FDEs \eqref{triangularEq} generates a two-parameter flow of $d$-dimensional linear nonsingular operators.
\end{theorem}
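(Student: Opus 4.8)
The plan is to mimic the proof of Theorem~\ref{thm.2param.flow} and to exploit the triangular structure of \eqref{eqn.triangular.nonlin}: everything reduces to applying the one-dimensional results of Section~\ref{sec.1dimNonlocalDS} successively, one coordinate at a time. First I would establish that the evolution mapping $\phi_{0,T_1}$ of \eqref{eqn.triangular.nonlin} is a bijection of $\R^d$ for every $T_1\in J$. Injectivity is immediate from Proposition~\ref{tri.nonlin.lower.bound}: if two solutions have $x(0)\ne y(0)$, then $\|x(T_1)-y(T_1)\|\ge\|x(0)-y(0)\|\,E_\alpha\big(-(\max_{[0,T_1]}L)T_1^\alpha\big)>0$ since $E_\alpha$ is strictly positive on $\R$. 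For surjectivity, given a target $x^\ast=(x_1^\ast,\dots,x_d^\ast)^\rT$, I solve coordinate by coordinate: the first equation $^{C\!}D_{0+}^\alpha x_1=f_1(t,x_1)$ is one-dimensional, so by Theorem~\ref{thm.2param.flow}(i) there is an initial value $x_{1,0}$ whose solution $x_1(\cdot)$ attains $x_1(T_1)=x_1^\ast$; substituting this fixed function into the second equation gives the one-dimensional FDE $^{C\!}D_{0+}^\alpha x_2=\hat f_2(t,x_2):=f_2(t,x_1(t),x_2)$, which is continuous and $L(t)$-Lipschitz in $x_2$ by \eqref{eqn.Lip.tri.nonlin}, so Theorem~\ref{thm.2param.flow}(i) again yields $x_{2,0}$ with $x_2(T_1)=x_2^\ast$; iterating $d$ times produces $x_0=(x_{1,0},\dots,x_{d,0})^\rT$ with $\phi_{0,T_1}(x_0)=x^\ast$.

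Once $\phi_{0,t}$ is bijective for every $t\in J$, the family $\phi_{s,t}:=\phi_{0,t}\circ\phi_{0,s}^{-1}$ is well defined, $\phi_{t,t}=\id$, and the flow property follows from the one-line computation $\phi_{s,t}\circ\phi_{u,s}=\phi_{0,t}\circ\phi_{0,s}^{-1}\circ\phi_{0,s}\circ\phi_{0,u}^{-1}=\phi_{u,t}$. It then remains to verify conditions (i) and (ii) of Definition~\ref{2parameterflow.tri.nonlin}: joint continuity of $\phi_{s,t}(x)$ in $(s,t,x)$ and the homeomorphism property of each $\phi_{s,t}$. Proposition~\ref{tri.nonlin.lower.bound} gives the lower bound $\|\phi_{0,t}(x)-\phi_{0,t}(y)\|\ge\|x-y\|\,E_\alpha\big(-(\max_{[0,t]}L)t^\alpha\big)$, while Remark~\ref{rem.multidim} (the multidimensional version of Theorem~\ref{thm.divergence.rate}) gives the matching upper bound $\|\phi_{0,t}(x)-\phi_{0,t}(y)\|\le\|x-y\|\,E_\alpha\big((\max_{[0,t]}L)t^\alpha\big)$; together these show that $\phi_{0,t}$ and $\phi_{0,t}^{-1}$ are bi-Lipschitz, hence homeomorphisms of $\R^d$, and therefore so is every $\phi_{s,t}$. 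Joint continuity of $\phi_{0,t}(x)$ in $(t,x)$ follows from the Volterra representation \eqref{eqn.volterra} combined with the divergence estimate.

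The step I expect to be the main obstacle is continuity in the \emph{backward} parameter $s$, i.e. joint continuity of $(s,y)\mapsto\phi_{0,s}^{-1}(y)$. Here the plan is as follows: suppose $(s',y')\to(s,y)$ and set $x_0':=\phi_{0,s'}^{-1}(y')$, $x_0:=\phi_{0,s}^{-1}(y)$. By the lower bound of Proposition~\ref{tri.nonlin.lower.bound},
$$
\|x_0'-x_0\|\le\frac{\|\phi_{0,s'}(x_0')-\phi_{0,s'}(x_0)\|}{E_\alpha\big(-(\max_{[0,s']}L)(s')^\alpha\big)},
$$
and the numerator equals $\|y'-\phi_{0,s'}(x_0)\|\le\|y'-y\|+\|\phi_{0,s}(x_0)-\phi_{0,s'}(x_0)\|\to0$, using continuity of $\phi_{0,\cdot}(x_0)$ in time with $x_0$ fixed, while the denominator stays bounded below by a positive constant as $s'\to s$; hence $x_0'\to x_0$. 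Composing, $\phi_{s,t}(x)=\phi_{0,t}\big(\phi_{0,s}^{-1}(x)\big)$ is continuous in $(s,t,x)$, which finishes part (i).

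For part (ii), when $A$ is triangular I may assume after relabeling coordinates that the system is the lower-triangular form \eqref{eqn.triangular.nonlin} with each $f_i$ linear and $\|A(t)\|\le L(t)$ as in \eqref{eqn.tri1}. Then the first coordinate equation $^{C\!}D_{0+}^\alpha x_1=a_{11}(t)x_1$ has, by the variation of constants formula (Lemma~\ref{lem.variationofconstant}), a solution linear in $x_1(0)$; feeding the already-solved coordinates into the $i$-th equation turns it into the affine one-dimensional FDE $^{C\!}D_{0+}^\alpha x_i=a_{ii}(t)x_i+g_i(t)$ with $g_i(t)=\sum_{j<i}a_{ij}(t)x_j(t)$, which is inductively linear in $(x_1(0),\dots,x_{i-1}(0))$, so Lemma~\ref{lem.variationofconstant} makes $x_i(t)$ linear in $(x_1(0),\dots,x_i(0))$. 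Hence $\phi_{0,t}$ is a linear operator on $\R^d$; being a bijection (part (i)) it is nonsingular, and therefore each $\phi_{s,t}=\phi_{0,t}\circ\phi_{0,s}^{-1}$ is a nonsingular linear operator. Together with part (i) this proves the theorem.
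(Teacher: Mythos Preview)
Your proposal is correct and is precisely the detailed version of the paper's own proof, which consists of the single sentence ``Similar to Theorem~\ref{thm.2param.flow}.'' Your coordinate-by-coordinate reduction for surjectivity, the bi-Lipschitz estimates via Proposition~\ref{tri.nonlin.lower.bound} and Remark~\ref{rem.multidim}, and the backward-parameter continuity argument are all sound and faithfully carry out what the paper leaves implicit.

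One small technical slip in part~(ii): Lemma~\ref{lem.variationofconstant} as stated requires the coefficient $M$ to be a \emph{constant}, whereas you apply it to $^{C\!}D_{0+}^\alpha x_i=a_{ii}(t)x_i+g_i(t)$ with a time-dependent diagonal entry. The Mittag--Leffler kernel formula is not valid in that generality. The fix is immediate and simpler than your route: since the full system \eqref{triangularEq} is linear in $x$ and has unique solutions, superposition gives directly that $x_0\mapsto\phi_{0,t}(x_0)$ is linear; nonsingularity then follows from bijectivity in part~(i), and each $\phi_{s,t}=\phi_{0,t}\circ\phi_{0,s}^{-1}$ is a nonsingular linear operator.
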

\begin{proof}
Similar to Theorem~\ref{thm.2param.flow}.
\end{proof}

\begin{definition}
The two-parameter flow $\phi_{s,t}$ generated by the triangular FDE \eqref{eqn.triangular.nonlin} is called the {\em nonlocal dynamical system generated by \eqref{eqn.triangular.nonlin}}.
\end{definition}

\section{A general high dimensional system of FDEs does not generate a dynamical system}\label{sec.d-dim}

In this section we show that, in the high dimensional case different trajectories of a FDE can intersect each other. Thus a high dimensional system of FDEs does, in general, not generate a dynamical system. In order for a high dimensional FDE to generate a nonlocal dynamical system we need an additional property of the FDE, for example triangularity.
The results of this sections also provide a counter example to the assertions of Bonilla, Rivero and Trujillo~\cite[Theorem 1, Propositions 1 and 2]{Bonilla2007}.

\begin{theorem}[Different trajectories of a high dimensional system of FDEs can meet]\label{thm.intersection}
For any $d\geq 2$ there exists a system of type \eqref{mainEq} with a property that it has two different solutions $x_1(\cdot), x_2(\cdot)$ with $x_1(0)\not= x_2(0)$ which intersect each other at some finite time moment $0<T<\infty$, i.e., $x_1(T)=x_2(T)$.
\end{theorem}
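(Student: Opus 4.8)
The plan is to reduce to the planar case $d=2$ (for $d>2$ one simply appends $d-2$ trivial coordinates, i.e.\ replaces the right-hand side by $(t,(u,v))\mapsto (f(t,u),0)$ with $v\in\R^{d-2}$, which sends solutions to solutions and distinct initial values to distinct initial values), and to produce the counterexample by \emph{first} prescribing the two solution curves and \emph{then} manufacturing a continuous right-hand side for which they are solutions. Fix $\alpha\in(0,1)$, normalise $T=1$, take $x_2\equiv 0$ the trivial curve, and let $x_1=:w$ be a $C^1$ curve $[0,1]\to\R^2$ with $w(0)\neq 0$, $w(t)\neq 0$ for $t\in[0,1)$, and $w(1)=0$. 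The graphs $\Gamma_i:=\{(t,x_i(t)):t\in[0,1]\}\subset[0,1]\times\R^2$ are compact, hence closed, and meet only at the point $(1,0)$. On $\Gamma_1\cup\Gamma_2$ define $g$ by $g(t,x_i(t)):={}^{C\!}D_{0+}^\alpha x_i(t)$; since each $x_i$ is $C^1$, each ${}^{C\!}D_{0+}^\alpha x_i$ is continuous (it is the Riemann--Liouville integral $I_{0+}^{1-\alpha}$ of a continuous function), so $g$ is well defined and continuous on $\Gamma_1\cup\Gamma_2$ \emph{provided} the two prescribed values agree at the single common point $(1,0)$, i.e.\ provided
$$
{}^{C\!}D_{0+}^\alpha w(1)=0 .
$$
Granting this, Tietze's extension theorem (applied componentwise) yields a continuous $f:[0,1]\times\R^2\to\R^2$ with $f|_{\Gamma_1\cup\Gamma_2}=g$; then ${}^{C\!}D_{0+}^\alpha x_i={}f(\cdot,x_i(\cdot))$ on $[0,1]$, so by Lemma~\ref{lem.integral.form} both $x_1=w$ and $x_2\equiv 0$ are solutions of ${}^{C\!}D_{0+}^\alpha x=f(t,x)$, they have distinct initial values $w(0)\neq 0$, and $x_1(1)=0=x_2(1)$, which is the assertion (with $J=[0,1]$).

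So everything comes down to building a $C^1$ curve $w$ that winds into the origin and whose Caputo derivative vanishes at the terminal time. I would look for $w$ in the form $w(s)=(1-s)^{1+\alpha}h(s)$ with $h:[0,1]\to\C\cong\R^2$ of class $C^1$ and nowhere zero; then automatically $w\in C^1([0,1])$, $w\neq 0$ on $[0,1)$ and $w(1)=0$. A one-line integration by parts gives $(1-s)^{-\alpha}w'(s)=-(1+\alpha)h(s)+(1-s)h'(s)$, so the requirement ${}^{C\!}D_{0+}^\alpha w(1)=\tfrac{1}{\Gamma(1-\alpha)}\int_0^1(1-s)^{-\alpha}w'(s)\,ds=0$ becomes the single complex scalar equation $h(0)+\alpha\int_0^1 h(s)\,ds=0$. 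This equation has no solution with $h$ real-valued — a nowhere-vanishing real $h$ keeps a constant sign, so $h(0)$ and $\int_0^1 h$ cannot cancel — which is exactly the manifestation of the fact that trajectories of a one-dimensional FDE cannot meet; but over $\C$ it is easily solved. Using the scaling freedom $h\mapsto\lambda h$, normalise $h(0)=1$ and seek $h(s)=r(s)e^{i\theta(s)}$ with $r>0$; taking $\theta$ to perform one full turn, e.g.\ $\theta(s)=2\pi s$, and $r$ symmetric about $s=\tfrac12$ makes $\int_0^1 h$ real by symmetry, while choosing $r$ suitably concentrated near $s=\tfrac12$ (where $e^{i\theta}=-1$) with the right total mass drives $\int_0^1 h(s)\,ds$ to the required negative value $-1/\alpha$; continuity and positivity of $r$ are immediate, and $r(0)=1$ can be arranged by taking the concentration bump to vanish at the endpoints.

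The hard part is precisely this last construction: the nonlocal memory of ${}^{C\!}D_{0+}^\alpha$ forces the \emph{velocity} of $w$, weighted by $(1-s)^{-\alpha}$ over the whole history $[0,1]$, to average to zero while $w$ still travels from a nonzero point to the origin, and this can only be arranged if the curve genuinely turns — which is why the phenomenon requires $d\geq 2$. The remaining ingredients are routine: $\Gamma_1\cup\Gamma_2$ is closed so Tietze applies; $g$ is continuous across $(1,0)$ because ${}^{C\!}D_{0+}^\alpha w$ is continuous and equals $0$ there by construction; and the passage from $d=2$ to general $d\geq 2$ is the trivial padding described above. If one prefers the interval $J=[0,\infty)$ to $J=[0,1]$, one just extends $f$ continuously and arbitrarily past time $1$; the two solutions necessarily separate again after they meet, but that is not required by the statement.
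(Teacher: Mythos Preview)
Your argument is correct. The reduction to $d=2$ is fine; the computation leading to the scalar constraint $h(0)+\alpha\int_0^1 h=0$ is accurate; a nowhere-vanishing $C^1$ complex $h$ satisfying it exists exactly as you describe (e.g.\ $h(s)=(1+c\,b(s))e^{2\pi i s}$ with a symmetric bump $b$ supported in $(1/4,3/4)$ and $c>0$ chosen to match the integral); the Caputo derivative of a $C^1$ function is continuous, so $g$ is well defined and continuous on $\Gamma_1\cup\Gamma_2$; and Tietze then yields a continuous $f$ for which both curves are solutions.

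Your route, however, is quite different from the paper's. The paper exhibits an \emph{explicit linear autonomous} system: it takes a complex zero $z^*$ of $E_\alpha$, sets $A$ to be the planar rotation with eigenvalues $e^{\pm i\arg z^*}$, and observes that every solution of ${}^{C\!}D_{0+}^\alpha x=Ax$ vanishes at the time $T$ with $\lambda T^\alpha=z^*$. That construction buys more than the bare theorem: the right-hand side is linear (hence globally Lipschitz and autonomous), so it simultaneously serves as a counterexample to the claims of Bonilla--Rivero--Trujillo about linear FDEs and shows that even the nicest possible $f$ cannot prevent intersection. Your Tietze-built $f$ is merely continuous and nonautonomous, so it proves the theorem as stated but not these stronger consequences. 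On the other hand, your approach is more elementary and self-contained: it needs no facts about the zero set of $E_\alpha$, and it makes transparent \emph{why} dimension two is essential --- your scalar constraint $h(0)+\alpha\int_0^1 h=0$ is unsolvable for real nowhere-vanishing $h$ (constant sign forbids cancellation), which is exactly the one-dimensional separation theorem seen from inside the construction.
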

\begin{proof}
It suffices to construct a two-dimensional system of type \eqref{mainEq} with the desired property.
Below we shall construct a two-dimensional linear autonomous system of FDEs with such a property.

Since $\alpha\in (0,1)$, the complex-valued Mittag-Leffler function $E_\alpha(\cdot)$ has infinitely many zeros in $\C$ (see Gorenflo \textit{et al.}~\cite[Corollary~3.10, p.~30]{GoKiMaRo14}). Take and fix $z^*\in \C$ such that $E_\alpha(z^*)=0$. Let $\phi:= \arg(z^*)\in (-\pi, \pi]$, put $\lambda := \cos\phi + i\sin\phi$, where $i = \sqrt{-1}\in\C$.
Note that since $\alpha\in \R$ we have  $E_\alpha(\overline{z^*}) = \overline{E_\alpha(z^*)} =0$, where $\overline{w}$ denotes the complex conjugate of the complex number $w$. Since $\alpha\in (0,1)$ we have $z^*\notin\R$, hence $\lambda\notin \R$. Let
$$
A:= 
\left(  
\begin{array}{cc}
 \cos\phi    &     \sin\phi \\
  -\sin\phi    & \cos\phi \\ 
   \end{array}
   \right).
$$
Then $A$ has two (complex) eigenvalues $\lambda, \overline\lambda$. We show that the linear FDE
\begin{equation}\label{eqn.2-dim}
    ^{C\!}D_{0+}^\alpha x(t)= Ax(t), \quad t\in\R_+,
\end{equation}
is the desired FDE. Indeed, it is known that this linear FDE is solvable (see Diethelm~\cite[Theorem~7.15, p.~152]{Diethelm2010}) and has two following independent real solutions
$$
 x_1(t):=  \left(  
\begin{array}{c}
 \big(E_\alpha(\lambda t^\alpha) + E_\alpha(\overline\lambda t^\alpha)\big)     
   \\ [6pt]
i\big(E_\alpha(\lambda t^\alpha) - E_\alpha(\overline\lambda t^\alpha)\big)  \\ 
   \end{array}
   \right),\;
  x_2(t):=   \left(  
\begin{array}{c}   
 -i\big(E_\alpha(\lambda t^\alpha) - E_\alpha(\overline\lambda t^\alpha)\big)  \\ [6pt]
 \big(E_\alpha(\lambda t^\alpha) + E_\alpha(\overline\lambda t^\alpha)\big)       \\ 
   \end{array}
   \right).
   $$
Put
$$
u(t) := E_\alpha(\lambda t^\alpha) + E_\alpha(\overline\lambda t^\alpha)
  \qquad
   \hbox{and}\quad
v(t) := i \big(E_\alpha(\lambda t^\alpha) - E_\alpha(\overline\lambda t^\alpha)\big).
$$
Then $u, v : \R_+ \rightarrow \R$, $u(0)=2, v(0)=0$, and
$$
 x_1(t) =  \left(  
\begin{array}{c}
 u(t)    
   \\ 
v(t) \\ 
   \end{array}
   \right),\;\;
  x_2(t) =   \left(  
\begin{array}{c}   
 -v(t) \\ 
 u(t)       \\ 
   \end{array}
   \right),
\quad
   x_1(0) =  \left(  
\begin{array}{c}
 2   
   \\ 
0\\ 
   \end{array}
   \right),\;\;
  x_2(0) =   \left(  
\begin{array}{c}   
 0\\ 
 2     \\ 
   \end{array}
   \right)
   $$
 The general solution of
\eqref{eqn.2-dim} is
\begin{equation}\label{eqn.2dim2}
x(t)= a x_1(t)  + b x_2(t)
\end{equation}
where $a,b\in\R$ are arbitrary real constants. Let $T>0$ be the unique finite positive number satisfying
$$
\lambda T^\alpha = z^*;
$$
such a $T$ exists uniquely due to the definition of $z^*$ and $\lambda$.
Clearly $u(T)=v(T)=0$, hence $x_1(T)=x_2(T)=(0,0)^\rT$. From \eqref{eqn.2dim2} it follows that for any solution $x(t)$ of \eqref{eqn.2-dim} we have $x(T) =0$.
\end{proof}

From Theorem~\ref{thm.intersection} we obtain immediately the following corollary.
\begin{corollary}[A higher dimensional FDE does not generate a nonlocal dynamical system]
For $d\geq 2$ the FDE \eqref{mainEq} does, in general, not generate a two-parameter flow in $\R^d$. Hence it does, in general, not generate a nonlocal dynamical system.
\end{corollary}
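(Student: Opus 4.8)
The plan is a one-step argument by contradiction, resting entirely on Theorem~\ref{thm.intersection}: all the substantive work has already been carried out there, and what remains is merely to record that two trajectories meeting in finite time is incompatible with the defining requirements of a two-parameter flow. First I would recall what the assertion "the FDE \eqref{mainEq} generates a two-parameter flow" actually means, exactly as in the one-dimensional case (Definition~\ref{dfn.0evolution-mapping}, Theorem~\ref{thm.2param.flow}) and the triangular case (Theorem~\ref{thm.2param.flow.triangular}): one first forms the evolution mappings $\phi_{0,t}:\R^d\to\R^d$, $x_0\mapsto x(t)$, where $x(\cdot)$ is the solution of \eqref{mainEq} issuing from $x_0$; the FDE is then said to generate a two-parameter flow if each $\phi_{0,t}$ is a bijection and the resulting family $\phi_{s,t}:=\phi_{0,t}\circ\phi_{0,s}^{-1}$ is a two-parameter flow in the sense of Definition~\ref{2parameterflow.tri.nonlin}, in particular each $\phi_{s,t}$ being a homeomorphism of $\R^d$.

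Now fix $d\geq 2$ and take the system of type \eqref{mainEq} furnished by Theorem~\ref{thm.intersection} (a planar linear autonomous FDE whose coefficient matrix has non-real eigenvalues aligned with a complex zero of $E_\alpha$, so that its whole solution family collapses at a finite time, padded by trivial equations when $d>2$), together with its two solutions $x_1(\cdot),x_2(\cdot)$ satisfying $x_1(0)\neq x_2(0)$ and $x_1(T)=x_2(T)$ for some finite $T>0$. For this system the evolution mapping at time $T$ satisfies
$$
\phi_{0,T}(x_1(0)) = x_1(T) = x_2(T) = \phi_{0,T}(x_2(0)),
$$
while $x_1(0)\neq x_2(0)$; hence $\phi_{0,T}$ is not injective, in particular not a bijection, and a fortiori not a homeomorphism. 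Therefore this FDE cannot generate a two-parameter flow in $\R^d$. Since, by definition, a nonlocal dynamical system generated by the FDE is precisely such a two-parameter flow, this same FDE does not generate a nonlocal dynamical system either; as $d\geq 2$ was arbitrary, the corollary follows. (This is also the counterexample to the assertions of Bonilla, Rivero and Trujillo~\cite{Bonilla2007} announced at the start of this section.)

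There is essentially no obstacle here: the heavy lifting is Theorem~\ref{thm.intersection}. The only point requiring a little care is the bookkeeping of which part of the definition of "generates a two-parameter flow" one contradicts — so I deliberately contradict injectivity of $\phi_{0,T}$, the weakest property in that definition, from which failure of the bijectivity and homeomorphism requirements follow immediately; the rest is a single line of deduction.
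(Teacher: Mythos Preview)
Your proposal is correct and is essentially the paper's own argument spelled out in full: the paper simply states that the corollary follows immediately from Theorem~\ref{thm.intersection}, and your contradiction of injectivity of $\phi_{0,T}$ is exactly the one-line deduction that word ``immediately'' encodes.
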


\begin{remark}
(i) Actually for the FDE \eqref{eqn.2-dim} {\em all the solutions} are equal $(0,0)^\rT$ at time $T$; hence all they meet each other.\\
(ii) Theorem~\ref{thm.intersection} shows that in contrary to the initial value problem, the terminal value problem for FDEs is not always solvable.\\
(iii) Theorem~\ref{thm.intersection} allows us to better understand the dynamics of the FDEs. It reveals a distinguished feature of FDEs in comparision with ODEs: different trajectories of an ODE cannot meet whereas for FDEs they may meet.\\
(iv) By small modification of $A$ in the proof of Theorem~\ref{thm.intersection} we can make the time of intersection $T$ small.\\
(v) By a small modification of the proof one can show that Theorem~\ref{thm.intersection} also holds for any positive real $\alpha \not=1$.
\end{remark}

\section*{Acknowledgement}
This research of the authors is funded by the Vietnam National Foundation for
Science and Technology Development (NAFOSTED) under Grant Number 101.03-2014.42.

\end{document}